\journal{arXiv}
\newtheorem{proposition}{Proposition}[section]
\newtheorem{corollary}[proposition]{Corollary}
\newtheorem{theorem}{Theorem}
\newtheorem{lemma}[proposition]{Lemma}
\theoremstyle{definition}
\newtheorem{definition}[proposition]{Definition}
\newtheorem{remark}[proposition]{Remark}
\DeclareMathAlphabet{\mathpzc}{OT1}{pzc}{m}{it}
\numberwithin{equation}{section}
\numberwithin{figure}{section}
\newcommand{\ehalb}[1][]{e^{-#1\rho T}}
\newcommand{\etwohalb}[1][]{e^{-#1\rho T'}}
\newcommand{\eonetwohalb}[1][]{e^{-#1\rho (T+T')}}
\begin{document}

\begin{frontmatter}

\title{The independent loss model with ordered insertions for the evolution of
  CRISPR spacers}

\author[mymainaddress]{F. Baumdicker\corref{mycorrespondingauthor}}
\cortext[mycorrespondingauthor]{Corresponding author}
\ead{franz.baumdicker@gmx.de}
\author[mymainaddress]{A. M. I. Huebner}
\author[mymainaddress]{P. Pfaffelhuber}

\address[mymainaddress]{Department of Mathematical Stochastics,
  Albert-Ludwigs-University of Freiburg}

\begin{abstract}
  Today, the CRISPR (clustered regularly interspaced short palindromic
  repeats) region within bacterial and archaeal genomes is known to
  encode an adaptive immune system. We rely on previous results on the
  evolution of the CRISPR arrays, which led to the ordered independent
  loss model, introduced by Kupczok and Bollback (2013). When focusing
  on the spacers (between the repeats), new elements enter a
  CRISPR array at rate $\theta$ at the leader end of the array,
  while all spacers present are lost at rate $\rho$ along the
  phylogeny relating the sample. Within this model, we compute the
  distribution of distances of spacers which are present in all arrays
  in samples of size $n=2$ and $n=3$. We use these results to
  estimate the loss rate $\rho$ from spacer array data.
\end{abstract}

\begin{keyword}
CRISPR, evolutionary model, estimation, loss rate, gain rate
\MSC[2010] 92D15 (Primary) 60K35, 92D20 (Secondary)
\end{keyword}

\end{frontmatter}


\section{Introduction}

The CRISPR Cas system is a widespread microbial adaptive defense
mechanism against viruses and plasmids
\citep{Marraffini2015,Rath2015}, that likely originated in archaea and
spread to bacteria via horizontal transfer \citep{Makarova2011origin}.
The Clustered Regulary Interspaced Short Palindromic Repeats (CRISPR)
have already been described in 1987 by \cite{Ishino1987}.  Later it
turned out that the unique sequences between these repeats, so called
spacers, are of foreign origin \citep{Bolotin2005} and serve as an
immunological memory passed to the offspring.  New spacers are
acquired and inserted at the leader end of the array
\citep{Barrangou2007}, such that the order of spacers represents the
chronological infection history of the bacterial population. Together
with CRISPR associated (cas) genes these spacers can provide
resistance against phages and plasmids by targeting molecular scissors
to the corresponding sequences in the invading DNA
\citep{Barrangou2007}.

The most prominent cas gene is Cas9, which recently led to a
revolution in genome engineering \citep{Doudna2014,Hsu2014}.  With the
CRISPR-Cas9 enzyme mechanism an uncomplicated and cheap technology to
alter the genome of potentially any organism is now available.  Due to
the precise targeting via engineered spacer sequences this system is
speeding up the pace of research and gives rise to applications with
incredible impact and opportunities in a variety of fields
\citep{Doudna2014}.
As just one among many examples, the concept of gene drive
\citep{Burt2003} in combination with the precision of CRISPR-Cas9 may
enable us to alter the genetics of entire populations
\citep{Esvelt2014,Oye2014}.

Here we will focus on the evolution of natural CRISPR-Cas systems and
their spacer arrays in microbial genomes.  CRISPR systems have been
classified into different types and subtypes, with different sets of
accompanying cas genes
\citep{Makarova2011classification,Makarova2015}.  A single genome can
contain different types of CRISPR and the rates at which new spacers
are inserted and old spacers are lost vary between the systems
\citep{Horvath2008}.
This suggests that different types may have different evolutionary
dynamics and functions beyond defense, e.g. in regulation of gene
expression \citep{Westra2014}.  As another example,
\cite{Lopez-Sanchez2012} suggest that CRISPR may control the
diversity of mobile genetic elements in \emph{Streptococcus
  agalactiae}.

So far we just got a glimpse of the ecological and evolutionary impact
of CRISPR cas systems.  In particular, the benefit of possessing a
CRISPR system and the parameter regime where they are maintained
\citep{Levin2010,Weinberger2012}, as well as the coevolutionary
dynamics of bacteria containing CRISPR loci and phages
\citep{Koskella2014,Han2017} have been considered.  However many
ecological and evolutionary aspects of CRISPR cas systems, as the
frequent horizontal transfer of the whole system, are still not
understood \citep{Rath2015}.
Not only the evolution of the whole CRISPR system but the evolving and
adapting spacer array itself is of interest.  The spacer array
represents snippets of previous phage/plasmid exposure that can help
to disentangle the interplay between bacterial and viral populations
\citep{Childs2014,Sun2015}.  Since resistance is inherited by the
offspring via the acquired spacer sequences, at least some CRISPR
systems blur the distinction between Darwinian and Lamarckian modes of
evolution \citep{Koonin2016}.
Modeling the evolution of spacer arrays will help to identify
differences between CRISPR types and interpret the observed pattern of
spacer insertion and deletion.
 
In 2013, \citeauthor{Kupczok2013} introduced probabilistic models for
the evolution of CRISPR spacer arrays.  They concluded that a model with ordered spacer insertions
and independent losses best describes the dynamics of spacer
array evolution.  In this model unique new spacers are inserted at the
leader end and each spacer gets lost independently at a constant rate;
see also Definition \ref{def:oiloss} below. Later, the model has been
extended to conclude that there is no evidence for frequent
recombination within the spacer arrays \citep{Kupczok2015}.  In
\cite{Kupczok2013} the constructed estimators for spacer insertion and
deletion rates assume that no phylogenetic information is available.
In contrast, in this paper we assume that the genealogy is known or has been
reconstructed adequately, e.g.\ based on the cas genes in front of the
spacer array. Given such a genealogy we look at distances between
equal spacers, i.e.\ spacers that appear in more than one array.
\cite{Kupczok2013} also considered an unordered independent loss
model, where the order of spacers is irrelevant. In pangenome analysis
\citep{Mira2010,Vernikos2014}, this model is known as the infinitely
many genes model \citep{BaumdickerHessPfaffelhuber2010} and has led to
methods to jointly estimate gene gain and loss rates based on the
frequency of genes in the sample
\citep{BaumdickerPfaffelhuberHess2012}. These methods can directly be
applied in our setting to jointly infer the rates of spacer insertion
and deletion from (unordered) spacer frequencies.  Here we compute the
distribution of distances of (ordered) spacers, which are present in
all arrays in samples of size $n=2$ and $n=3$.  We show that including
the order of spacer arrays by looking at equal spacer distances in a
sample of arrays allows to decouple estimation of spacer insertion and
spacer loss rate.

The paper is organized as follows: In Section~\ref{S:2}, we introduce
the ordered independent loss model. In Section~\ref{S:3}, we compute
the distribution of equal spacer distances of samples of size $n=2$
and $n=3$. As a by-product, we find sufficient statistics useful for
the estimation of the loss rate. These are employed in
Section~\ref{S:4} where we give maximum likelihood estimators and
perform simulations in order to show their accuracy.

\section{Model}
\label{S:2}
The following model for the evolution of spacers in the CRISPR-system
is based on work by \cite{Kupczok2013}. Since this model was called
the ordered model with independent losses, we follow this terminology here.

\begin{definition}[The ordered independent loss model along a single line\label{def:oiloss}]
  Let $\mathcal S = (S_t)_{t\geq 0}$ be a Markov jump process with
  state space $E = [0,1]^{\mathbb N}$ and the following dynamics: If
  $S(t) = s = (s_1, s_2,\dots)$, it jumps to
  \begin{align*}
    (U, s_1, s_2,\dots) &\text{ at rate  } \theta \text{ for some independent, uniform $U\in[0,1]$},\\
    (s_1,\dots,s_{i-1}, s_{i+1},s_{i+2},\dots) & \text{ at rate } \rho \text{ for each $i=1,2,\dots$}
  \end{align*}
  The first type is called a {\em gain}-event (or insertion event), while the latter is called a {\em loss}-event (or deletion event).
  We refer to $S$ as the independent loss model with ordered gains, or as the \emph{ordered independent loss model}, for short. 
  
\end{definition}

\noindent
A simple result is the following, which is clear from the definition
of $\mathcal S$.

\begin{lemma}[Equilibrium of $S$\label{l:seq}]
  Let $(U_1, U_2,\dots)\in E$ be a sequence of independent,
  $U([0,1])$-distributed random variables. Then, the distribution
  $\pi$ of $(U_1, U_2,\dots)$ is an equilibrium of $\mathcal S$ from
  Definition~\ref{def:oiloss}.
\end{lemma}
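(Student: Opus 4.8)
The plan is to show that $\pi$ is invariant by verifying the infinitesimal condition $\int_E \mathcal L f\, d\pi = 0$ on a sufficiently rich class of test functions, where $\mathcal L$ is the generator of $\mathcal S$. The whole argument rests on two elementary observations about the i.i.d.\ uniform law $\pi$: first, prepending an independent $U([0,1])$ variable to a $\pi$-distributed sequence again yields a $\pi$-distributed sequence; and second, deleting any single coordinate from a $\pi$-distributed sequence again yields a $\pi$-distributed sequence. Both are immediate, since an i.i.d.\ sequence stays i.i.d.\ under a shift by one independent coordinate and under removal of one coordinate. These are precisely the two operations carried out by the gain- and loss-events, so the stationarity of $\pi$ should follow once the generator computation is made rigorous.

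Concretely, I would write the generator acting on a test function $f:E\to\mathbb R$ as
$$\mathcal L f(s) = \theta \int_0^1 \bigl(f(u,s_1,s_2,\dots)-f(s)\bigr)\,du + \rho\sum_{i\ge1}\bigl(f(s_1,\dots,s_{i-1},s_{i+1},\dots)-f(s)\bigr).$$
The subtlety is that the total loss rate out of any state is infinite, so $\mathcal S$ is not a Markov jump process with bounded rates and $\mathcal L f$ need not be defined for general $f$. To handle this I would restrict to cylinder functions, i.e.\ functions $f$ depending only on the first $m$ coordinates $s_1,\dots,s_m$ for some $m$. For such $f$, a loss of a coordinate $i>m$ leaves $s_1,\dots,s_m$ unchanged, so the corresponding summand vanishes; the infinite sum then collapses to the finitely many terms $i=1,\dots,m$, and $\mathcal L f$ is well defined. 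As cylinder functions are measure-determining and form a core, verifying the invariance condition on them suffices.

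With this reduction, integrating against $\pi$ gives
$$\int_E \mathcal L f\,d\pi = \theta\Bigl(\mathbb E_\pi f(U,S_1,S_2,\dots)-\mathbb E_\pi f\Bigr) + \rho\sum_{i=1}^{m}\Bigl(\mathbb E_\pi f(S_1,\dots,S_{i-1},S_{i+1},\dots)-\mathbb E_\pi f\Bigr),$$
where $(S_1,S_2,\dots)\sim\pi$ and $U$ is an independent uniform. By the first observation the gain term vanishes, and by the second observation each loss term vanishes, so the entire expression is zero. This yields $\int_E\mathcal L f\,d\pi=0$ for every cylinder function, hence $\pi$ is invariant for $\mathcal S$.

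I expect the only genuine obstacle to be the bookkeeping around the infinite jump rate: one must check that restricting to cylinder functions both renders $\mathcal L f$ well defined and still pins down invariance for the full process. The two distributional facts themselves are trivial, which is why the statement is flagged as clear from the definition; the care lies entirely in handling the infinite-dimensional, infinite-rate setting cleanly.
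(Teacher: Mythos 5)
The paper offers no written proof at all --- the lemma is introduced as ``clear from the definition'' --- and the content behind that remark is exactly your two elementary observations: prepending an independent uniform variable, and deleting a single coordinate, each preserve the i.i.d.\ $U([0,1])$ law. So the probabilistic heart of your argument matches what the paper has in mind. The problem is the machinery you wrap around it. The criterion ``$\int \mathcal L f\, d\pi = 0$ for all cylinder functions $f$ implies $\pi$ invariant'' is only valid if cylinder functions form a core for the generator of $\mathcal S$ (equivalently, if the martingale problem for $\mathcal L$ restricted to cylinder functions is well posed). You assert this in one clause and then concede at the end that it is ``the only genuine obstacle,'' but you never close it --- and here it is genuinely nontrivial: the total jump rate out of every state is infinite, the process makes infinitely many jumps in every time interval, and the semigroup does \emph{not} map cylinder functions to cylinder functions, since the first $m$ coordinates of $S_t$ depend on an unbounded random number of coordinates of $S_0$ (losses keep pulling in deeper coordinates). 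Without the core property, an Echeverr\'ia-type theorem only yields existence of \emph{some} stationary solution of the martingale problem with marginal $\pi$, not stationarity of the given process $\mathcal S$, so as written the proof is incomplete at precisely the point you flagged.

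There is a cleaner route that uses your two observations and sidesteps generator theory entirely, and it is the natural reading of ``clear from the definition'': verify stationarity at the level of the transition law. Construct $\mathcal S$ graphically from the Poisson process of gain times and the independent exponential loss clocks, all of which are independent of the spacer \emph{values}. Conditioned on this driving noise, $S_t$ is obtained from $S_0$ by deleting a value-independent set of coordinates (a.s.\ leaving infinitely many) and prepending a finite number of independent uniforms. A subsequence of an i.i.d.\ uniform sequence selected independently of its values is again i.i.d.\ uniform, and prepending independent uniforms preserves this; hence $S_0 \sim \pi$ implies $S_t \sim \pi$ for every $t \geq 0$, which is invariance with no core argument needed. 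If you want to keep the generator formulation, you would instead have to prove the core/well-posedness claim, which is more work than the lemma itself.
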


\begin{remark}[More equilibria\label{rem:more}]
  We note that $\pi$ is not the only equilibrium of the process. For
  example, let $N \sim \text{Poi}(\theta/\rho)$ and, conditioned on
  $N$, let $(V_1,\dots,V_N)$ be independent and $U([0,1])$-distributed,
  and $V_{N+1} = V_{N+2}=\cdots =1$. Then, the distribution of
  $(V_1,V_2,\dots)$ is stationary for $S$ as well.\\
  Indeed, let $N_t:=\sup\{n: U_1,\dots,U_n\notin S(0)\}$. Then,
  $\mathcal N=(N_t)_{t\geq 0}$ is a death-immigration process with immigration
  rate $\theta$ and, if it is in state $n$, death rate
  $n\rho$. For this process, it is well-known that
  $N_t\xRightarrow{t\to\infty}N$; hence, necessarily, $V_1,\dots,V_N$ are
  independent and uniform draws from $[0,1]$ for a stationary
  distribution. However, $V_{N+1},V_{N+2},\dots$ are states also found
  in $S(0)$, so if $S(0)=(1,1,\dots)$, then $S_{N_t+k}(t)=1$ for all $t$
  and $k$ by construction.
\end{remark}

\sloppy In order to formulate our results, we require the ordered
independent loss model not only along a single line, but also along an
ultra-metric tree. For this, we introduce some notation.

\begin{remark}[Ultrametric trees; tree-indexed processes]
\label{rem:notUltra}
  \begin{asparaenum}
  \item Recall that a tree $\mathbbm T$ with leaves
    $\mathbbm L\subseteq \mathbbm T$ is called ultra-metric if
    $d(\ell_1,\ell_3)\leq \max(d(\ell_1, \ell_2), d(\ell_2,\ell_3))$
    for all $\ell_1, \ell_2, \ell_3\in\mathbbm L$. Here, $d(.,.)$
    denotes the graph distance and $\ell\in\mathbbm T$ is a
    \emph{leaf} if $\mathbbm T\setminus\{\ell\}$ only has a single
    connected component. Note that for such an ultra-metric tree there
    is a unique $r\in\mathbbm T$ (called the root) such that
    $d(r, \ell)$ does not depend on $\ell \in\mathbbm L$.  In
    addition, there is an order on $\mathbbm T$ such that $s\leq t$
    iff $s \in [r,t]$, where $[r,t]$ is the unique path from $r$ to
    $t$.  Then, we also say that $s$ is ancestor of $t$.  Using this
    order the most recent common ancestor $s\wedge t \in \mathbbm T$
    of $s$ and $t$ is the largest element in $\mathbbm T$ which is
    ancestor of both, $s$ and $t$.
  \item Usually, a Markov process $X = (X_t)_{t\geq 0}$ has the
    property that $(X_t)_{t\geq s}$ is independent of
    $(X_t)_{t\leq s}$ conditional on $X_s$. 
    Moreover, we call a Markov-process time-homogeneous if
    there is a family of transition kernels
    $(p_s(.,.))_{s\geq 0}$ such that
    \begin{align}
      \label{eq:markov}
      \mathbf P(X_t\in A|X_r=x) = p_{t-r}(x,A) \text{ for all }r\leq t.
    \end{align}
    If a Markov-process is piecewise constant, it is usually called a Markov jump process. 
    If it is time-homogeneous, it follows from \eqref{eq:markov} that the waiting time to 
    the next jump has an exponential distribution. Its parameter depends on the current
    state and is usually referred to as the {\it rate} of the exponential distribution.
    We call a tree-indexed process $X = (X_t)_{t\in \mathbbm T}$
    time-homogeneous Markov if~\eqref{eq:markov} holds for all
    $r,t\in\mathbbm T$ with $r\leq t$, where $t-r := d(r,t)$, and, 
    conditional on the value of the process at a node, the processes in the
    two descending subtrees are independent. For more
    work on Markov processes indexed by trees, see e.g.\
    \cite{benjamini}.
  \end{asparaenum}
\end{remark}

\begin{definition}[The ordered independent loss model along an ultrametric tree\label{def:oilossT}]
  Let $\mathbbm T$ be an ultra-metric tree with root $r\in\mathbbm T$.
  The $\mathbbm T$-indexed time-homogeneous Markov process
  $\mathcal S = (S_t)_{t\in\mathbbm T}$ with $S_r\sim\pi$ (recall from
  Lemma~\ref{l:seq}) and transition kernels given through gain and
  loss events as in Definition~\ref{def:oiloss}, is denoted the
  \emph{ordered independent loss model (along $\mathbbm T$)}.
\end{definition}

\begin{remark}
 Note that it is straightforward to formulate the presented results for non-ultrametric trees.
 However, since the formulas simplify significantly for ultrametric trees, we assume that $\mathbb T$ is ultrametric.
\end{remark}

In the sequel, we fix the ultra-metric tree $\mathbbm T$, its set of
leaves $\mathbbm L$ and the process $\mathcal S$ from
Definition~\ref{def:oilossT}. Recall that $S_t \in [0,1]^{\mathbbm N}$
and we will use the shorthand notation
$$ S_{t,i} := (S_t)_i.$$
Moreover, we will identify the vector $S_t$ with the set of its
entries, i.e. $S_t = \{S_{t, i}: i=1,2,\dots\}$. Note that all entries
of $S_t$ are different almost surely.

\begin{definition}[Equal spacers\label{def:eqsp}]
  For $\mathbbm K\subseteq \mathbbm L$ and $\ell\in\mathbbm K$, define
  recursively
  \begin{align*}
    E^{\ell, \mathbbm K}_{0} 
    & := E^{\ell, \mathbbm K, (\mathbbm L)}_{0} := 0,
    \\
    E^{\ell, \mathbbm K}_{n+1} 
    & := E^{\ell, \mathbbm K, (\mathbbm L)}_{n+1} := \min\Big\{i > E^{\ell, \mathbbm K}_{n}: S_{\ell,i} \in \bigcap_{k\in\mathbbm K} S_k 
      \setminus \Big(\bigcup_{k'\in\mathbbm L\setminus \mathbbm K} S_{k'}\Big) \Big\}, \qquad n=1,2,\dots
  \end{align*}
  Here, $E^{\ell, \mathbbm K}_{n}$ is the spacer position in $\ell$ of
  the $n$th spacer which is also contained in all
  $S_k, k\in\mathbbm K$, but in none of
  $S_{k'}, k'\in \mathbbm L\setminus\mathbbm K$.
\end{definition}

An illustration of the process from Definition \ref{def:oilossT} and the corresponding equal spacers are shown in Figure \ref{fig:oilossT}.

\begin{figure}
\centering
 \includegraphics{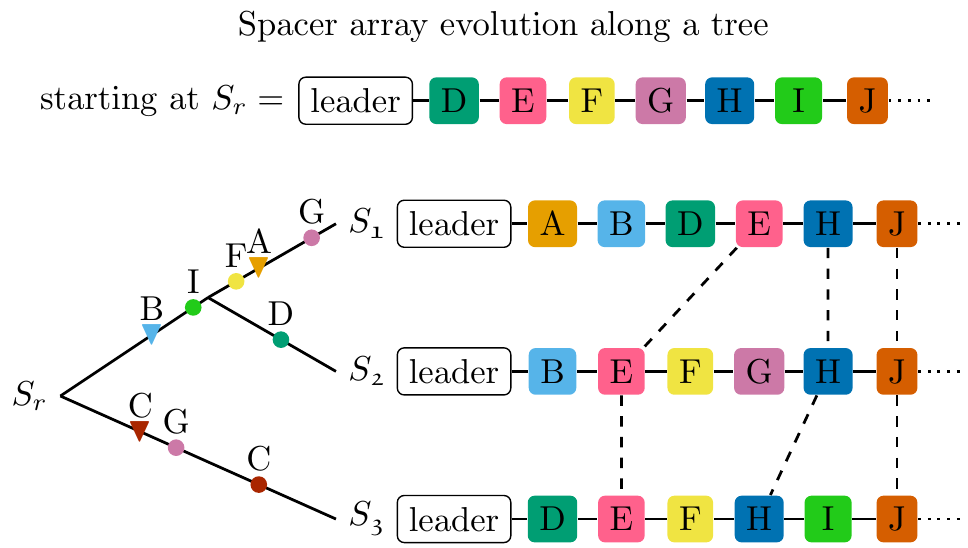}
 \caption{
 \label{fig:oilossT}
 Illustration of the ordered independent loss model along a tree from Definition \ref{def:oilossT}.
 New spacers (A-C) are inserted at the leader position at rate $\theta$ along the branches, marked by $\blacktriangledown$.
 Present spacers (A-J $\cdots$) are lost at rate $\rho$, marked by $\bullet$. Dashed lines connect the first three equal spacers shared between all arrays.
 Here $\mathbbm L = \{\mathfrak 1, \mathfrak 2, \mathfrak 3\}$ and from Definition \ref{def:eqsp} we get, 
 $E_{1}^{\mathfrak 2, \mathbbm L} = E_{1}^{\mathfrak 3, \mathbbm L} = 2$,
 $E_{1}^{\mathfrak 1, \mathbbm L} = E_{2}^{\mathfrak 3, \mathbbm L} = 4$,
 $E_{2}^{\mathfrak 1, \mathbbm L} = E_{2}^{\mathfrak 2, \mathbbm L} = 5$
 and $E_{3}^{\mathfrak 1, \mathbbm L} = E_{3}^{\mathfrak 2, \mathbbm L} = E_{3}^{\mathfrak 3, \mathbbm L} = 6$.
 }
\end{figure}

\section{Results}
\label{S:3}
\subsection[Trees with two leaves]{Trees with two leaves $|\mathbbm L|=2$}
In the special case that $\mathbbm L$ consists of only two points, we
denote these leaves by $\mathfrak 1$ and $\mathfrak 2$. In addition,
we set $d(\mathfrak 1, \mathfrak 2)=2T$ for some $T\geq 0$ and define
the following random variables:
\begin{align*}
  V_i & := E_{i}^{\mathfrak 1, \{\mathfrak 1, \mathfrak 2\}}, \qquad 
        W_i := E_{i}^{\mathfrak 2, \{\mathfrak 1, \mathfrak 2\}}, \qquad i=1,2,\dots,
\end{align*}
i.e.\ $V_i$ is the $i$th element of $S_{\mathfrak 1}$ which is also
contained in $S_{\mathfrak 2}$ and $W_i$ is the $i$th element of
$S_{\mathfrak 2}$ which is also contained in $S_{\mathfrak 1}$.


\begin{theorem}[Distribution of equal spacer sequence in two leaves\label{T:1}]
  Let $(A,B), (A_1, B_1), (A_2, B_2),\dots$ be iid pairs of random
  variables with joint distribution
  \begin{align}\label{eq:AB}
    \mathbb P(A=a, B=b) & = \binom{a+b}{a} \cdot \frac{e^{- \rho T}}{2-e^{-\rho T}} 
                          \cdot \bigg( \frac{1-e^{- \rho T}}{2-e^{-\rho T}}  \bigg)^{a+b}
  \end{align}
  In addition, let $C_{\mathfrak 1}, C_{\mathfrak 2}$ be iid Poisson
  distributed r.v.\ with parameter $\frac\theta\rho(1-e^{-\rho T})$.
  Then, 
  $$(V_1, W_1), (V_2-V_1, W_2-W_1), (V_3-V_2, W_3-W_2),\dots$$ are
  independent with
  \begin{align*}
    (V_1, W_1) \sim (C_{\mathfrak 1} + A_1, C_{\mathfrak 2} + B_1), \qquad
    (V_i-V_{i-1}, W_i-W_{i-1}) \sim (A_i, B_i), \qquad i=2,3,\dots
  \end{align*}
\end{theorem}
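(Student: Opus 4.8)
The plan is to trace every shared spacer back to the most recent common ancestor $r=\mathfrak 1\wedge\mathfrak 2$ and to exploit that each gain event inserts a fresh uniform label. First I would show that a value $v\in[0,1]$ lies in $S_{\mathfrak 1}\cap S_{\mathfrak 2}$ if and only if it was already present at $r$ and survived the loss dynamics on both branches, each of length $T$: since every insertion contributes an independent $U([0,1])$ variable, two spacers gained after $r$ (on the same branch or on different branches) coincide with probability zero, and a gained spacer never equals an ancestral one. Hence the shared spacers are exactly the ancestral spacers $U_1,U_2,\dots$ (the entries of $S_r\sim\pi$) that are retained along both lines.

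Next I would record the spatial decomposition of each leaf array. Because every gain is inserted at the leader end, at each leaf the array splits into a contiguous leading block of spacers gained on that branch, followed by the retained ancestral spacers in their original order. The size of the leading block is the first independent ingredient: gains along a branch form a rate-$\theta$ Poisson process, a spacer gained at time $u\in[0,T]$ after $r$ survives to the leaf with probability $e^{-\rho(T-u)}$, and these retention events are independent. Poisson thinning gives that this count is Poisson with parameter $\int_0^T\theta e^{-\rho(T-u)}\,du=\tfrac{\theta}{\rho}(1-e^{-\rho T})$, that the counts on the two branches are independent of each other and of the fate of the ancestral spacers, and thus identifies them as $C_{\mathfrak 1},C_{\mathfrak 2}$. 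Consequently $V_i$ equals $C_{\mathfrak 1}$ plus the number of ancestrally retained spacers of $\mathfrak 1$ up to and including the $i$th shared spacer, and symmetrically for $W_i$.

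It then remains to analyse the ancestral spacers. Each $U_k$ is, independently across $k$ and across the two branches, retained on $\mathfrak 1$ with probability $e^{-\rho T}$ and on $\mathfrak 2$ with probability $e^{-\rho T}$, so I would mark $U_k$ by its pair of retention indicators, of type $(Y,Y)$, $(Y,N)$, $(N,Y)$ or $(N,N)$ with probabilities $e^{-2\rho T}$, $e^{-\rho T}(1-e^{-\rho T})$, $e^{-\rho T}(1-e^{-\rho T})$ and $(1-e^{-\rho T})^2$. The shared spacers are the $(Y,Y)$-marks; a $(Y,N)$-mark advances only the $\mathfrak 1$-position, an $(N,Y)$-mark only the $\mathfrak 2$-position, and an $(N,N)$-mark is invisible to both arrays. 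The i.i.d.\ marking turns the sequence of marks into a renewal sequence in the $(Y,Y)$-events, so the contributions of the successive inter-$(Y,Y)$ runs are i.i.d.; this yields the asserted independence of $(V_1,W_1),(V_2-V_1,W_2-W_1),\dots$, with only the first pair carrying the extra $C_{\mathfrak 1},C_{\mathfrak 2}$ coming from the leading gain blocks. For the per-run joint law I would write $A,B$ for the numbers of $(Y,N)$- and $(N,Y)$-marks in one run and sum over the geometric number $m$ of invisible $(N,N)$-marks: factoring the trinomial weight as $\binom{a+b+m}{m}\binom{a+b}{a}$ and applying $\sum_{m\ge 0}\binom{a+b+m}{m}x^{m}=(1-x)^{-(a+b+1)}$ with $x=(1-e^{-\rho T})^2$, then using $1-(1-e^{-\rho T})^2=e^{-\rho T}(2-e^{-\rho T})$, collapses the sum exactly to \eqref{eq:AB}.

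I expect the main obstacle to be bookkeeping the independence rigorously rather than any single hard estimate: one must verify that the marked Poisson process of gains, the infinite i.i.d.\ retention marking of the ancestral spacers, and the induced block decomposition are jointly independent in the right way, so that locating the shared spacers does not secretly couple the $C_{\mathfrak j}$ with the run-counts, and that the renewal argument is valid for the infinite array. The $(N,N)$-thinning summation producing the binomial weight $\binom{a+b}{a}$ in \eqref{eq:AB} is the only genuine computation, while the renewal identity for the increments is the conceptual heart; assembling these pieces gives the claim.
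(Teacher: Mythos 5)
Your proposal is correct and follows essentially the same route as the paper's proof: the Poisson-thinning count of post-MRCA gains gives $C_{\mathfrak 1},C_{\mathfrak 2}$, your four-type retention marking of the ancestral spacers is exactly the paper's four-sided die with probabilities $p_1,\dots,p_4$, and your sum over the number of $(N,N)$-marks is the paper's sum over the index of the first roll of a~4, reindexed, using the same identity $\sum_{b\ge 0}\binom{a+b}{a}x^{b}=(1-x)^{-(a+1)}$. The only difference is cosmetic: you make explicit some bookkeeping the paper leaves implicit (a.s.\ distinctness of the uniform labels, the leading-block decomposition, and the renewal structure of the inter-$(Y,Y)$ runs).
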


\begin{remark}[$A$ and $B$ are geometrically
  distributed\label{rem:geo1}]
  \begin{asparaenum}
    \item In the sequel, we will use the identity
      \begin{align}
        \label{eq:sumx}
        \sum_{b=0}^\infty \binom{a+b}{a} x^b = \frac{1}{(1-x)^{a+1}}, \qquad i=0,1,2,\dots
      \end{align}
      for $|x|<1$ on several occasions. It can easily be proven by
      induction.
    \item For the distribution of $(A, B)$ from~\eqref{eq:AB}, we note
      that both, $A$ and $B$ are geometrically distributed with
      parameter
      $e^{-\rho T}$. (We come back to this observation in Remark~\ref{rem:geo2}.)\\
      Indeed: For
      \begin{align}
        \label{def:x}
        x = \frac{1-e^{-\rho T}}{2-e^{-\rho T}} = 1 - \frac{1}{2-e^{-\rho T}},
      \end{align}
      we have that
      \begin{align*}
        \mathbbm P(A=a) & = \frac{e^{- \rho T}}{2-e^{-\rho T}}
                          \sum_{b=0}^\infty \binom{a+b}{a} x^{a+b}
                          = \frac{e^{-\rho T}}{2-e^{-\rho T}} \frac{1}{1-x}\Big(\frac{x}{1-x}\Big)^{a}
                          = e^{-\rho T}(1-e^{-\rho T})^a.
      \end{align*}
      So, we have shown that $A$ has the desired distribution. Symmetry
      then gives the same for $B$.
    \end{asparaenum}
\end{remark}

\noindent
Before we come to the proof of Theorem~\ref{T:1}, we state the
sampling formula which results from Theorem~\ref{T:1}.

\begin{corollary}[Sampling formula for equal spacer sequence in two
  leaves\label{cor:1}]
  The joint distribution of $(V_1, V_2,\dots,V_m, W_1, W_2, \dots, W_m)$ is given
  by
  \begin{align*}
    \mathbbm P( & V_1=v_1, \dots, V_m=v_m, W_1=w_1,\dots,W_m=w_m) 
    \\ & = 
         \bigg(\Big(\frac{\theta}{\rho}(1-e^{-\rho T})\Big)^{v_1+w_1} 
         e^{-2\frac{\theta}{\rho}(1-e^{-\rho T})} \, \frac{e^{-\rho T}}{2-e^{-\rho T}} 
    \\ & \qquad 
         \cdot\sum_{v=0}^{v_1} \sum_{w=0}^{w_1} \binom{v+w}{v} 
         \frac{1}{(v_1-v)!(w_1-w)!}\frac{1}{(\frac{\theta}{\rho}(2-e^{-\rho T}))^{v+w}}\bigg)
    \\ & \qquad \qquad \cdot 
         \bigg(\prod_{i=2}^m \binom{\Delta v_i + \Delta w_i}{\Delta v_i}\bigg)
         \cdot \bigg(\frac{e^{-\rho T}}{2-e^{-\rho T}} 
         \bigg)^{m-1}
         \cdot \bigg(\frac{1-e^{-\rho T}}{2 - e^{-\rho T}}
         \bigg)^{\sum_{i=2}^m \Delta v_i + \Delta w_i}
  \end{align*}
  for $m=1,2,\dots$, where
  $\Delta v_i := v_i - v_{i-1}, \Delta w_i := w_i - w_{i-1},
  i=2,3,\dots$
\end{corollary}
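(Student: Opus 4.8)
The plan is to read off the factorization directly from the independence asserted in Theorem~\ref{T:1}. Since the pairs $(V_1, W_1)$ and $(V_i - V_{i-1}, W_i - W_{i-1})$, $i = 2,\dots,m$, are independent, the joint probability splits as
\begin{align*}
  \mathbb P(V_1 = v_1,\dots,V_m = v_m, W_1 = w_1,\dots,W_m = w_m)
  = \mathbb P(V_1 = v_1, W_1 = w_1)\prod_{i=2}^m \mathbb P\big(A_i = \Delta v_i, B_i = \Delta w_i\big),
\end{align*}
where I have used the distributional identification $(V_i - V_{i-1}, W_i - W_{i-1}) \sim (A_i, B_i)$ for $i\geq 2$. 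The two groups of factors in the claimed formula correspond precisely to these two pieces: the first bracket to the law of $(V_1, W_1)$, and the remaining three factors to the product over $i\geq 2$.

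The product over $i\geq 2$ is immediate: substituting the law \eqref{eq:AB} of $(A,B)$ yields
\begin{align*}
  \prod_{i=2}^m \mathbb P\big(A_i = \Delta v_i, B_i = \Delta w_i\big)
  = \bigg(\prod_{i=2}^m \binom{\Delta v_i + \Delta w_i}{\Delta v_i}\bigg)
    \bigg(\frac{e^{-\rho T}}{2-e^{-\rho T}}\bigg)^{m-1}
    \bigg(\frac{1-e^{-\rho T}}{2-e^{-\rho T}}\bigg)^{\sum_{i=2}^m \Delta v_i + \Delta w_i},
\end{align*}
which is exactly the last line of the corollary; this step is merely a matter of collecting the binomial, the normalizing, and the geometric factors separately.

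For the first bracket I would compute the law of $(C_{\mathfrak 1} + A_1, C_{\mathfrak 2} + B_1)$ by conditioning on $(A_1, B_1)$. Writing $\lambda := \frac{\theta}{\rho}(1-e^{-\rho T})$ and using that $C_{\mathfrak 1}, C_{\mathfrak 2}$ are independent $\mathrm{Poi}(\lambda)$ variables, independent of $(A_1,B_1)$, gives
\begin{align*}
  \mathbb P(V_1 = v_1, W_1 = w_1)
  = \sum_{v=0}^{v_1}\sum_{w=0}^{w_1}
    \mathbb P(A_1 = v, B_1 = w)\, e^{-\lambda}\frac{\lambda^{v_1-v}}{(v_1-v)!}\, e^{-\lambda}\frac{\lambda^{w_1-w}}{(w_1-w)!}.
\end{align*}
After pulling the factors $\lambda^{v_1+w_1}$, $e^{-2\lambda}$ and $\frac{e^{-\rho T}}{2-e^{-\rho T}}$ out of the double sum, the surviving $\lambda^{-(v+w)}$ combines with the geometric ratio from \eqref{eq:AB} through the algebraic identity
\begin{align*}
  \frac{1}{\lambda}\cdot\frac{1-e^{-\rho T}}{2-e^{-\rho T}} = \frac{1}{\frac{\theta}{\rho}(2-e^{-\rho T})},
\end{align*}
which turns the remaining sum into exactly the stated one and reproduces the first bracketed factor. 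The only point requiring care is the bookkeeping of the exponents of $\lambda$ and of the geometric ratio through the convolution together with this last simplification; everything else follows at once from Theorem~\ref{T:1}.
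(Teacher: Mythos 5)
Your proposal is correct and follows essentially the same route as the paper's own proof: both factor the joint law via the independence in Theorem~\ref{T:1}, read off the product over $i\geq 2$ directly from \eqref{eq:AB}, and then compute $\mathbbm P(V_1=v_1,W_1=w_1)$ as a Poisson--geometric convolution, pulling out $\lambda^{v_1+w_1}e^{-2\lambda}\frac{e^{-\rho T}}{2-e^{-\rho T}}$ and simplifying via $\frac{1}{\lambda}\cdot\frac{1-e^{-\rho T}}{2-e^{-\rho T}}=\frac{1}{\frac{\theta}{\rho}(2-e^{-\rho T})}$ (the paper's ``plugging in $x$ and $z$'' step). No gaps; the bookkeeping you flag is exactly the content of the paper's computation.
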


\begin{proof}
  By the independence from Theorem~\ref{T:1} and the distribution
  given in~\eqref{eq:AB}, the only thing which remains to be proven is
  that
  \begin{align*}
    \mathbbm P( V_1=v_1, W_1=w_1) 
    & = \Big(\frac{\theta}{\rho}(1-e^{-\rho T})\Big)^{v_1+w_1} 
      e^{-2\frac{\theta}{\rho}(1-e^{-\rho T})} \, \frac{e^{-\rho T}}{2-e^{-\rho T}} 
    \\ & \qquad \qquad \qquad 
         \cdot\sum_{a=0}^{v_1} \sum_{b=0}^{w_1} \binom{a+b}{a} 
         \frac{1}{(v_1-a)!(w_1-b)!}\frac{1}{(\frac{\theta}{\rho}(2-e^{-\rho T}))^{a+b}}.
  \end{align*}
  From Theorem~\ref{T:1}, we know that
  $(V_1, W_1) \sim (C_{\mathfrak 1} + A, C_{\mathfrak 2} + B)$, where
  the distribution of $(A,B)$ is as in \eqref{eq:AB} and
  $C_{\mathfrak 1}$ and $C_{\mathfrak 2}$ are independent and Poisson
  distributed with mean $z:=\frac\theta\rho(1-e^{-\rho T})$.
  Hence, with $x$ from \eqref{def:x},
  \begin{align*}
    \mathbbm P(& V_1=v_1,W_1=w_1) 
                 = \sum_{a=0}^{v_1} \sum_{b=0}^{w_1} \mathbbm P(A = a, 
                 C_{\mathfrak 1}=v_1-a, B=b, C_{\mathfrak 2} = w_1-b)
    \\ &= \sum_{a=0}^{v_1} \sum_{b=0}^{w_1} \mathbbm P(A = a, B=b) \cdot 
         \mathbbm P(C_{\mathfrak 1}=v_1-a) \cdot \mathbbm P(C_{\mathfrak 2}=w_1-b)
    \\ &=\sum_{v=0}^{v_1} \sum_{w=0}^{w_1} \binom{a+b}{a} \frac{e^{-\rho T}}{2-e^{-\rho T}}
         x^{a+b}
         \frac{z^{v_1+w_1-(a+b)}}{(v_1-a)!(w_1-b)!} e^{-2z}\\
               &= z^{v_1  +w_1} \frac{e^{-2z}e^{-\rho T}}{2 - e^{-\rho T}} 
                 \sum_{a=0}^{v_1} \sum_{b=0}^{w_1} \binom{a+b}{a} \frac{1}{(v_1-a)!(w_1-b)!} 
                 \Big(\frac{x}{z}\Big)^{a+b}.
  \end{align*}
  Plugging in $x$ and $z$ gives the result.
\end{proof}

\begin{proof}[Proof of Theorem~\ref{T:1}]
  For the leaves $\mathfrak 1$ and $\mathfrak 2$, recall that
  $\mathfrak 1\wedge\mathfrak 2$ denotes their MRCA (most recent
  common ancestor) and $d(\mathfrak 1, \mathfrak 1\wedge\mathfrak 2) = T$. Let $C_{\mathfrak 1} (C_{\mathfrak 2})$ be the
  number of gain-events between $\mathfrak 1\wedge\mathfrak 2$ and
  $\mathfrak 1$ ($\mathfrak 2$), which don't get lost until
  $\mathfrak 1$ (until $\mathfrak 2$). Then, by construction,
  $C_{\mathfrak 1}$ and $C_{\mathfrak 2}$ are independent (since they
  depend on independent gain events) and Poisson distributed with mean
  $$ \int_{0}^{T} \theta e^{-\rho t}  
  dt = \frac\theta\rho(1-e^{-\rho T}).$$
  For $(S_{\mathfrak 1, C_{\mathfrak 1} + i})_{i=1,2,\dots}$ and
  $(S_{\mathfrak 2, C_{\mathfrak 2} + i})_{i=1,2,\dots}$, i.e.\ the
  spacers after the just mentioned gain-events, we note that, by
  construction,
  $$ \{S_{\mathfrak 1, C_{\mathfrak 1} + i}: i=1,2,\dots\} \cup 
  \{S_{\mathfrak 2, C_{\mathfrak 2} + i}: i=1,2,\dots\} \subseteq
  S_{\mathfrak 1 \wedge\mathfrak 2}.$$
  Moreover, we have that (by independence of loss-events at all
  positions) the events
  $$ (\{s \in S_{\mathfrak 1}\}, \{s\in S_{\mathfrak 2}\})_{s\in S_{\mathfrak 1 \wedge\mathfrak 2}}$$
  are independent with
  $$ \mathbbm P(s\in S_{\mathfrak 1}) =  \mathbbm P(s\in S_{\mathfrak 2}) = e^{-\rho T}$$
  for all $s\in S_{\mathfrak 1 \wedge\mathfrak 2}$. Equivalently, we
  find that independently for all
  $s\in S_{\mathfrak 1 \wedge \mathfrak 2}$,
  \begin{equation}\label{eq:p14}
    \begin{aligned}
      p_1 &:= \mathbbm P(s\in S_{\mathfrak 1}\setminus S_{\mathfrak
        2}) = e^{-\rho T}(1-e^{-\rho T}),
      \\
      p_2 & := \mathbbm P(s\in S_{\mathfrak 2}\setminus S_{\mathfrak
        1}) = e^{-\rho T}(1-e^{-\rho T}),
      \\
      p_3 & := \mathbbm P(s\notin S_{\mathfrak 2}\cup S_{\mathfrak 1})
      = (1-e^{-\rho T})^2,
      \\
      p_4 & := \mathbbm P(s\in S_{\mathfrak 2}\cap S_{\mathfrak 1}) =
      e^{-2\rho T}.
    \end{aligned}
  \end{equation}
  Moving along $S_{\mathfrak 1\wedge \mathfrak 2}$, toss a four-sided
  die, numbered $1,\dots,4$. If it comes up~1, keep the spacer in
  $\mathfrak 1$ but not in $\mathfrak 2$; if it comes up~2, keep it in
  $\mathfrak 2$ but not in $\mathfrak 1$; if it comes up~3, neither
  keep it in $\mathfrak 1$ nor in $\mathfrak 2$; if it comes up~4,
  keep it in both, $\mathfrak 1$ and $\mathfrak 2$. Then, we need to
  compute the joint distribution of the number of die rolls $A$
  with~1 and $B$ with~2 before the first~4 (which indicates an equal
  spacer). Letting
  $K$ be the first die roll with~4, we have that
  \begin{align*}
    \mathbbm P&(A = a, B=b) = \sum_{k = a+b+1}^\infty \mathbbm P(A=a, B=b, K=k)\\
              &= \sum_{k = a+b+1}^\infty \mathbbm P(K=k)\mathbbm P(A=a, B=b | K=k) \\
              &=\sum_{k = a+b+1}^\infty p_4 (1-p_4)^{k-1} \binom{k-1}{a+b}\binom{a+b}{a} 
                \left(\frac{p_1}{1-p_4}\right)^{a}\left(\frac{p_2}{1-p_4}\right)^{b}\left(\frac{p_3}{1-p_4}\right)^{k-1-(a+b)}\\
              &=\binom{a+b}{a} p_1^{a} p_2^{b} p_4 \sum_{k=a+b+1}^\infty \binom{k-1}{a+b} p_3^{k-1-(a+b)}\\
              &=\binom{a+b}{a} p_1^{a} p_2^{b} p_4 \sum_{k=0}^\infty \binom{a+b+k}{a+b} p_3^{k}\\
              &=\binom{a+b}{a} \frac{p_4}{1-p_3} \Big(\frac{p_1}{1-p_3}\Big)^{a+b} \\
              & = \binom{a+b}{a} \frac{e^{-2\rho T}}{1-(1-e^{-\rho T})^2} 
                \Big(\frac{e^{-\rho T}(1-e^{-\rho T})}{1-(1-e^{-\rho T})^2}\Big)^{a+b}\\
              & = \binom{a+b}{a} \frac{e^{-\rho T}}{2 - e^{-\rho T}} 
                \Big(\frac{1-e^{-\rho T}}{2 - e^{-\rho T}}\Big)^{a+b}
  \end{align*}
  since $p_1=p_2$, where we have used \eqref{eq:sumx} in the second to
  last equality. Before the first equal spacer, we have for
  $(V_1, W_1)$ the sum of new and old spacers, while for
  $(V_i, W_i), i\geq 1$,
  we only have old spacers. Since loss events of spacers in
  $\mathfrak 1\wedge \mathfrak 2$ are independent,
  the result follows.
\end{proof}

\begin{remark}[$\Delta V_i$ and $\Delta W_i$ are geometrically
  distributed\label{rem:geo2}]
  We have seen in the proof of Theorem~\ref{T:1} that the distribution
  of $(\Delta V_i, \Delta W_i)$ for $i=2,3,\dots$ can be obtained by
  rolling a die with probabilities $p_1,\dots,p_4$ given by
  \eqref{eq:p14} and counting the number of occurrences of~1 and~2 before
  the first~4. For the marginal distribution of $\Delta V_i$, we are
  asking for the number of occurrences of~1 before the
  first~4. Clearly, this is geometrically distributed with success
  probability $p_4/(p_1+p_4) = e^{-\rho T}$. See also the
  formal calculation in Remark~\ref{rem:geo1}.
\end{remark}

\subsection[Trees with three leaves]{Trees with three leaves $|\mathbbm L|=3$}
In the special case that $\mathbbm L$ consists of three points,
denoted $\mathfrak 1, \mathfrak 2$ and $\mathfrak 3$, we define the
following random variables:
\begin{align*}
  X_i & := E_{i}^{\mathfrak 1, \{\mathfrak 1, \mathfrak 2, \mathfrak 3\}}, \qquad 
        Y_i := E_{i}^{\mathfrak 2, \{\mathfrak 1, \mathfrak 2, \mathfrak 3\}}, \qquad 
        Z_i := E_{i}^{\mathfrak 3, \{\mathfrak 1, \mathfrak 2, \mathfrak 3\}}, \qquad i=0,1,2,\dots,
\end{align*}
i.e.\ $X_i$ is the $i$th element of $S_{\mathfrak 1}$ which is also
contained in both, $S_{\mathfrak 2}$ and $S_{\mathfrak 3}$, $Y_i$ is
the $i$th element of $S_{\mathfrak 2}$ which is also contained in
$S_{\mathfrak 1}$ and $S_{\mathfrak 3}$, and $Z_i$ is the $i$th
element of $S_{\mathfrak 3}$ which is also contained in
$S_{\mathfrak 1}$ and $S_{\mathfrak 2}$. In between $X_i$ and
$X_{i+1}$, for example, we find spacers of three classes: those, which are
only in $S_{\mathfrak 1}$, i.e.\ in
$S_{\mathfrak 1} \setminus (S_{\mathfrak 2}\cup S_{\mathfrak 3})$,
those which are shared with $S_{\mathfrak 2}$, i.e.\ in
$S_{\mathfrak 1} \cap S_{\mathfrak 2} \setminus S_{\mathfrak 3}$, and
those shared with $S_{\mathfrak 3}$, i.e.\ in
$S_{\mathfrak 1} \cap S_{\mathfrak 3} \setminus S_{\mathfrak 2}$.
Recalling the notation from Definition~\ref{def:eqsp}, we write for
$\mathbbm K\subseteq\mathbbm L = \{\mathfrak 1, \mathfrak 2, \mathfrak
3\}$
\begin{align*}
  F_i^{\mathbbm K}
  & := \# \{E_i^{k, \mathbbm L},E_i^{k, \mathbbm L}\!\!+\!1,\dots,E_{i+1}^{k, \mathbbm L}\} \cap 
    \{E_n^{k, \mathbbm K}: n=1,2,\dots\}, \qquad i=0,1,2,...
\end{align*}
where the right hand side does not depend on which $k\in\mathbbm K$ we
choose, because the order is not altered by losses.
In words, $F_i^{\mathbbm K}$ is the number of spacers within
$S_k$, which are between the $i$th and $(i+1)$st spacer shared among
$\mathfrak 1, \mathfrak 2, \mathfrak 3$ and that appear in all elements of
$\mathbbm K$, but in no element of $\mathbbm L\setminus \mathbbm
K$. As an example, we can rewrite
\begin{align*}
  F_i^{\{\mathfrak 1\}} & =\# \{S_{\mathfrak 1, X_i},S_{\mathfrak 1, X_i+1},\dots,S_{\mathfrak 1, X_{i+1}}\} 
                          \cap (S_{\mathfrak 1} \setminus ( S_{\mathfrak 2} \cup S_{\mathfrak 3})),
  \\ 
  F_i^{\{\mathfrak 1,  \mathfrak 2\}} & = \# \{S_{\mathfrak 1, X_i},S_{\mathfrak 1, X_i+1},\dots,S_{\mathfrak 1, X_{i+1}}\} 
                                        \cap ((S_{\mathfrak 1} \cap S_{\mathfrak 2})\setminus S_{\mathfrak 3}),
  \\ & = \# \{S_{\mathfrak 2, Y_i},S_{\mathfrak 2, Y_i+1},\dots,S_{\mathfrak 2, Y_{i+1}}\} 
       \cap ((S_{\mathfrak 1} \cap S_{\mathfrak 2})\setminus S_{\mathfrak 3}).
\end{align*}
Note that
$X_{i+1} - X_i = F_i^{\mathfrak 1 \setminus (\mathfrak 2, \mathfrak
  3)} + F_i^{(\mathfrak 1 \cup \mathfrak 2)\setminus \mathfrak 3} +
F_i^{(\mathfrak 1 \cup \mathfrak 3)\setminus \mathfrak 2} + 1$
by construction, $i=0,1,2,\dots$ 

Since there is exactly one tree topology of a tree with three leaves,
we will pick an ultra-metric tree, where $\mathfrak 1$ and
$\mathfrak 2$ are closer relatives than $\mathfrak 1$ and
$\mathfrak 3$ (and therefore also closer than $\mathfrak 2$ and
$\mathfrak 3$). So, $T, T'$ denote the distances from $\mathfrak 1$ to
$\mathfrak 3$ and from $\mathfrak 1$ to $\mathfrak 2$, respectively,
with $T\geq T'$. See Figure \ref{fig:1} for an illustration of the
tree topology.

\begin{figure}
\centering
 \begin{tikzpicture}
  \draw (1,0) -- (2,1.5) -- (3,3);
  \draw (3,0) -- (2,1.5);
  \draw (5,0) -- (3,3);
  \node[below] at (1,0){$\mathfrak 1$};
  \node[below] at (3,0){$\mathfrak 2$};
  \node[below] at (5,0){$\mathfrak 3$};
  \draw (6.1,0) -- (5.9,0) -- (6,0)  -- (6,3) -- +(0.1,0) -- +(-0.1,0);
  \node[left] at (0,0.75){$T'$};
  \draw (0.1,0) -- (-0.1,0) -- (0,0)  -- (0,1.5) -- +(0.1,0) -- +(-0.1,0);
  \node[left] at (6,1.5){$T$};
 \end{tikzpicture}
 \caption{\label{fig:1}The tree topology for three leaves is chosen
   such that $\mathfrak 1, \mathfrak 2$ are closer related than
   $\mathfrak 1, \mathfrak 3$ and $\mathfrak 2, \mathfrak 3$. }
\end{figure}

\begin{theorem}[Distribution of equal spacer sequence in three leaves\label{T:2}]
  For $\mathbbm L = \{\mathfrak 1, \mathfrak 2, \mathfrak 3\}$, let
  $(A^{\mathbbm K})_{\emptyset\subsetneq \mathbbm K \subsetneq
    \mathbbm L}, (A_1^{\mathbbm K})_{\emptyset\subsetneq \mathbbm K
    \subsetneq \mathbbm L}, (A_2^{\mathbbm K})_{\emptyset\subsetneq
    \mathbbm K \subsetneq \mathbbm L},\dots$
  be iid tuples of random variables with joint distribution
  \begin{equation}
    \label{eq:ABCDEF}
    \begin{aligned}
      &P(A^{\{\mathfrak1\}} = a, A^{\{\mathfrak2\}} = b,
      A^{\{\mathfrak3\}} = c,
      A^{\{\mathfrak1,\mathfrak2\}} = d, A^{\{\mathfrak1,\mathfrak3\}} = e, A^{\{\mathfrak2,\mathfrak3\}} = f)\\
      &= \binom{a+b+c+d+e+f}{a,b,c,d,e,f} \Big(\frac{(1-e^{-\rho
          T})(1-e^{-\rho T'})}{r}\Big)^{a+b}
      \Big(\frac{1-2e^{-\rho T} + e^{-\rho 
          (T+T')})}{r}\Big)^{c} \\ & \qquad\qquad \qquad\qquad
      \qquad\qquad \cdot \Big(\frac{e^{-\rho T'}(1-
        e^{-\rho T})}{r}\Big)^{d} \Big(\frac{e^{-\rho 
          T}(1- e^{-\rho T'})}{r}\Big)^{e+f}
      \frac{e^{-\rho T}e^{-\rho T'}}{r}
    \end{aligned}
  \end{equation}
  with 
  $$ r := 3 - e^{-\rho T'} - e^{-\rho T}(2-e^{-\rho T'}).$$ 
  In addition, let
    \begin{align*}
      &C^{\{\mathfrak 1\}} \stackrel d = \text{Poi}\left(\frac{\theta}{\rho}(1-e^{-\rho T'})
        (1+e^{-\rho T'}-e^{-\rho T})\right),
      &&  C^{\{\mathfrak 1, \mathfrak 2\}} \stackrel d = \text{Poi}\left(\frac{\theta}{\rho}
         (1-e^{- \rho (T-T')}) e^{-2\rho T'}\right),
      \\  &C^{\{\mathfrak 2\}} \stackrel d = \text{Poi}\left(\frac{\theta}{\rho}(1-e^{-\rho T'})
         (1+e^{-\rho T'}-e^{-\rho T})\right),      
      &&  C^{\{\mathfrak 1, \mathfrak 3\}} = 0,
      \\ & C^{\{\mathfrak 3\}} \stackrel d = \text{Poi}\left(\frac{\theta}{\rho}
            (1-e^{-\rho T})\right),      
      &&  C^{\{\mathfrak 2, \mathfrak 3\}} = 0
   \end{align*}
  be independent. Then,
  $$ (F_0^{\mathbbm K})_{\emptyset \subsetneq \mathbbm K\subsetneq \mathbbm L}, 
  (F_1^{\mathbbm K})_{\emptyset \subsetneq \mathbbm K\subsetneq
    \mathbbm L}, (F_2^{\mathbbm K})_{\emptyset \subsetneq \mathbbm
    K\subsetneq \mathbbm L},\dots$$ are independent with
  \begin{align*}
    (F_0^{\mathbbm K})_{\emptyset \subsetneq \mathbbm K\subsetneq \mathbbm L} 
    & \stackrel d = (C^{\mathbbm K} + A^{\mathbbm K})_{\emptyset \subsetneq \mathbbm K\subsetneq \mathbbm L},
    \\ 
    (F_i^{\mathbbm K})_{\emptyset \subsetneq \mathbbm K\subsetneq \mathbbm L} 
    & \stackrel d = (A_i^{\mathbbm K})_{\emptyset \subsetneq \mathbbm K\subsetneq \mathbbm L}, \qquad i=1,2,...
  \end{align*}
\end{theorem}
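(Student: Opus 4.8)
The plan is to follow the proof of Theorem~\ref{T:1} closely, with the single internal node replaced by the two internal nodes of the three-leaf tree, namely the root $r$ and the MRCA $\mathfrak 1\wedge\mathfrak 2$. I would split each spacer seen in a leaf into an \emph{old} spacer already present at the root (where $S_r\sim\pi$) and a \emph{new} spacer gained on one of the four branches after $r$: the two terminal branches of length $T'$ leading to $\mathfrak 1$ and $\mathfrak 2$, the branch of length $T-T'$ from $r$ to $\mathfrak 1\wedge\mathfrak 2$, and the branch of length $T$ leading to $\mathfrak 3$. Since gains enter at the leader end and losses never reorder the array, every surviving new spacer lies strictly leader-proximal to all old spacers, hence to every equal spacer shared by $\mathfrak 1,\mathfrak 2,\mathfrak 3$ (which is necessarily old). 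Consequently new spacers contribute only to $F_0^{\mathbbm K}$, while the blocks $F_i^{\mathbbm K}$, $i\geq 1$, are built purely from old spacers lying between consecutive equal spacers.

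For the old spacers I would compute, for a single $s\in S_r$, the probability of each of the eight presence/absence patterns across $(\mathfrak 1,\mathfrak 2,\mathfrak 3)$. With the abbreviations $\alpha=e^{-\rho T'}$ (survival along either terminal branch of $\mathfrak 1,\mathfrak 2$), $q=e^{-\rho(T-T')}$ (survival from $r$ to $\mathfrak 1\wedge\mathfrak 2$) and $\beta=e^{-\rho T}=q\alpha$ (survival from $r$ to $\mathfrak 3$), the fates of $s$ in $\mathfrak 1$ and $\mathfrak 2$ are conditionally independent given survival to $\mathfrak 1\wedge\mathfrak 2$, but unconditionally correlated through the shared factor $q$, while the fate in $\mathfrak 3$ is independent of both. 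Multiplying out gives each pattern probability; in particular $P(\emptyset)=(1-q\alpha(2-\alpha))(1-\beta)$, and a short simplification yields $1-P(\emptyset)=\beta r$ with $r$ as in the statement. Because distinct root spacers evolve independently and losses preserve their order, the sequence of old spacers is a sequence of i.i.d.\ draws over these eight categories, and the $\emptyset$-spacers are invisible in every leaf.

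I would then reproduce the negative-multinomial computation of Theorem~\ref{T:1}: conditioning on the position $K$ of the first draw landing in the all-three category $\mathbbm L$ (an equal spacer) and summing over the interspersed invisible $\emptyset$-draws by means of \eqref{eq:sumx}, the joint law of the counts of the six proper nonempty subsets strictly between two consecutive equal spacers is negative multinomial with category weights $P(\text{pattern }\mathbbm K)/(1-P(\emptyset))$ and success weight $P(\mathbbm L)/(1-P(\emptyset))$. Dividing each pattern probability by $\beta r$ reproduces \eqref{eq:ABCDEF} term by term, and the independence of the tuples $(A_i^{\mathbbm K})_{\mathbbm K}$ across $i$ is immediate from the i.i.d.\ structure of the draws.

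Finally, for the new spacers I would invoke Poisson colouring. Gains form independent rate-$\theta$ Poisson processes on the four branches, and a gain at distance $t$ from a node reaches it with probability $e^{-\rho t}$; colouring each surviving gain by the set of leaves it ultimately reaches splits these processes into independent Poisson counts. A gain on the branch to $\mathfrak 3$ reaches only $\mathfrak 3$; a gain on a terminal branch to $\mathfrak j$ reaches only $\mathfrak j$; and a gain on $r\to\mathfrak 1\wedge\mathfrak 2$ may reach $\mathfrak 1$, $\mathfrak 2$, both, or neither, but never $\mathfrak 3$, so $C^{\{\mathfrak 1,\mathfrak 3\}}=C^{\{\mathfrak 2,\mathfrak 3\}}=0$. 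Integrating the corresponding survival probabilities over branch length --- for instance $\int_0^{T'}\theta e^{-\rho t}\,dt$ from the terminal branch plus $\int_0^{T-T'}\theta e^{-\rho s}e^{-\rho T'}(1-e^{-\rho T'})\,ds$ from $r\to\mathfrak 1\wedge\mathfrak 2$ for $C^{\{\mathfrak 1\}}$ --- yields the stated Poisson means, and independence of the new gains from the old-spacer draws follows from the independence of the gain processes from $S_r$. The main obstacle is the bookkeeping around the shared branch $r\to\mathfrak 1\wedge\mathfrak 2$: one must correctly disentangle the correlated survival to $\mathfrak 1$ and $\mathfrak 2$ for both old and new spacers and check that the resulting colours are genuinely independent Poisson variables; the remaining steps lift the two-leaf argument essentially verbatim.
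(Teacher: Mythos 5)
Your proposal is correct and follows essentially the same route as the paper's proof: the same decomposition into old spacers (present at the root, handled by an eight-category die along $S_{\mathfrak 1\wedge\mathfrak 2\wedge\mathfrak 3}$ with a negative-multinomial count over the visible patterns, your identity $1-P(\emptyset)=e^{-\rho T}r$ being exactly the paper's observation $r=e^{\rho T}(1-q_7)$) and new spacers (Poisson thinning on the four branches with the same survival integrals, forcing $C^{\{\mathfrak 1,\mathfrak 3\}}=C^{\{\mathfrak 2,\mathfrak 3\}}=0$). No gaps beyond those the paper itself leaves implicit.
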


\begin{remark}[Old and new spacers]
  \begin{asparaenum}
  \item In the proof of the Theorem, we will divide the spacers into
    {\em old} spacers, which were already present in
    $\mathfrak 1\wedge \mathfrak 2 \wedge \mathfrak 3$, the most
    recent common ancestor of $\mathfrak 1, \mathfrak 2$ and
    $\mathfrak 3$, and {\em new} spacers, which were gained after
    $\mathfrak 1\wedge \mathfrak 2 \wedge \mathfrak 3$.  As in the
    proof of Theorem~\ref{T:1}, we will argue that every old spacer
    has a chance to be kept until $\mathfrak 1, \mathfrak 2$ and
    $\mathfrak 3$. Here, we have to keep in mind that losses in
    $\mathfrak 1$ and $\mathfrak 2$ are not independent due to the
    chosen tree topology.
  \item Although we formulate Theorem~\ref{T:2} analogously to
    Theorem~\ref{T:1} for two leaves, there are some conceptual
    differences. In particular, in the case of two leaves,
    $\mathfrak 1$ and $\mathfrak 2$, it is clear that only after the
    first equal spacers we can be sure that spacers are {\em old} in
    the sense that they are also contained in
    $\mathfrak 1\wedge\mathfrak 2$. In the case of three leaves,
    $\mathfrak 1, \mathfrak 2$ and $\mathfrak 3$, and the
    tree-topology from above, we know that the number of {\em new}
    spacers shared between $\mathfrak 1, \mathfrak 3$, and between
    $\mathfrak 2, \mathfrak 3$, is zero (since
    $C^{\{\mathfrak 1, \mathfrak 3\}} =C^{\{\mathfrak 2, \mathfrak
      3\}} = 0$).
    Hence, if the first spacer in
    $(S_{\mathfrak 1}\cap S_{\mathfrak 3})\cup (S_{\mathfrak 2}\cap
    S_{\mathfrak 3})$
    arises, we know that subsequent spacers must be {\em old}. In
    particular,
    $(S_{\mathfrak 1,1},\dots,S_{\mathfrak 1, X_1}), (S_{\mathfrak
      2,1},\dots,S_{\mathfrak 2, Y_1}), (S_{\mathfrak
      3,1},\dots,S_{\mathfrak 3, Z_1})$
    contain some information about which spacers are new and old,
    which cannot be gathered from
    $F^{\{\mathfrak 1\}}_1, F^{\{\mathfrak 2\}}_1, F^{\{\mathfrak
      3\}}_1$.
  \end{asparaenum}
\end{remark}

\begin{proof}[Proof of Theorem~\ref{T:2}]

  For the leaves $\mathfrak 1$ and $\mathfrak 2$, we denote their MRCA
  by $\mathfrak 1\wedge\mathfrak 2$ and $d(\mathfrak 1, \mathfrak 1 \wedge \mathfrak 2) = T'$.

  Let
  $C_{\mathfrak 1} (C_{\mathfrak 2})$ be the number of gain-events
  between $\mathfrak 1\wedge\mathfrak 2$ and $\mathfrak 1$
  ($\mathfrak 2$), which don't get lost until $\mathfrak 1$ (until
  $\mathfrak 2$). Then, by construction, $C_{\mathfrak 1}$ and
  $C_{\mathfrak 2}$ are independent (since they depend on independent
  gain events) and Poisson distributed with mean
  $$ \int_{0}^{T'} \theta  e^{-\rho t}  
  dt = \frac\theta\rho(1-e^{-\rho T'}).$$
  For $(S_{\mathfrak 1, C_{\mathfrak 1} + i})_{i=1,2,\dots}$ and
  $(S_{\mathfrak 2, C_{\mathfrak 2} + i})_{i=1,2,\dots}$, i.e.\ the
  spacers after the just mentioned gain-events, we note that, by
  construction,
  $$ \{S_{\mathfrak 1, C_{\mathfrak 1} + i}: i=1,2,\dots\} \cup 
  \{S_{\mathfrak 2, C_{\mathfrak 2} + i}: i=1,2,\dots\} \subseteq
  S_{\mathfrak 1 \wedge\mathfrak 2}.$$

  We take the same route as in the proof of Theorem \ref{T:1}. First,
  consider the new spacers, i.e.\ spacers gained after
  $\mathfrak 1\wedge \mathfrak 2\wedge \mathfrak 3$. Note that there
  are no new spacers in
  $F_0^{\{\mathfrak 1, \mathfrak 3\}}, F_0^{\{\mathfrak 2, \mathfrak
    3\}}$
  (hence we set
  $C^{\{\mathfrak 1, \mathfrak 3\}} = C^{\{\mathfrak 2, \mathfrak
    3\}}=0$).
  Since gain-events follow a Poisson process, the number of new
  spacers in
  $(F_0^{\mathbbm K})_{\emptyset \subsetneq \mathbbm K\subsetneq L}$
  are independent and have a Poisson distribution which
  is uniquely determined by their mean. As an example,
  consider $F_0^{\{\mathfrak 1\}}$, here, we have to take into account
  spacers gained between
  $\mathfrak 1\wedge \mathfrak 2\wedge \mathfrak 3$ and
  $\mathfrak 1\wedge \mathfrak 2$, which are kept in $\mathfrak 1$ but
  lost in $\mathfrak 2$, and spacers gained between
  $\mathfrak 1\wedge \mathfrak 2$ and $\mathfrak 1$, which are not
  lost until $\mathfrak 1$. We compute 
  $$ \theta \int_0^{T-T'} e^{-\rho s}ds \cdot e^{-\rho T'}(1-e^{-\rho T'}) +  \theta \int_0^{T'}
  e^{-\rho s}ds = \frac\theta\rho (1-e^{-\rho T'})\big(
  (1 - e^{-\rho (T-T')})e^{-\rho T'} + 1\big)$$
  for the mean of $F_0^{\{\mathfrak 1\}}$, which corresponds to
  $C^{\{\mathfrak 1\}}$ above. The rates of all other
  classes of new spacers are obtained accordingly.

  Moving along the spacers in
  $S_{\mathfrak 1\wedge \mathfrak 2\wedge \mathfrak 3}$, we now
  distinguish the new spacers into eight cases (spacers kept only in $\mathfrak 1$,
  only in $\mathfrak 2$, only in $\mathfrak 3$, only in $\mathfrak 1$
  and $\mathfrak 2$, only in $\mathfrak 1$ and $\mathfrak 3$, only in
  $\mathfrak 2$ and $\mathfrak 3$, kept in none and kept in all). We
  count the number of die rolls with a $1,2,\dots,6$ until the first
  $8$ appears. The probabilities for the die are defined analogous to
  \eqref{eq:p14} as
  \begin{align*}
    q_1 &:= P(s\in S_\mathfrak1 \setminus (S_\mathfrak2 \cup S_\mathfrak3)) =  \ehalb(1-\etwohalb)(1-\ehalb)\\
    q_2 &:= P(s\in S_\mathfrak2 \setminus (S_\mathfrak1 \cup S_\mathfrak3)) = \ehalb(1-\etwohalb)(1-\ehalb)\\
    q_3 &:= P(s\in S_\mathfrak3 \setminus (S_\mathfrak1 \cup S_\mathfrak2)) = \ehalb(1-2\ehalb+\eonetwohalb)\\
    q_4 &:= P(s\in (S_\mathfrak1 \cap S_\mathfrak2) \setminus S_\mathfrak3) = \ehalb\etwohalb(1-\ehalb)\\
    q_5 &:= P(s\in (S_\mathfrak1 \cap S_\mathfrak3) \setminus S_\mathfrak2) = \ehalb\ehalb(1-\etwohalb)\\
    q_6 &:= P(s\in (S_\mathfrak2 \cap S_\mathfrak3) \setminus S_\mathfrak1) = \ehalb\ehalb(1-\etwohalb)\\
    q_7 &:= P(s\notin S_\mathfrak1 \cup S_\mathfrak2 \cup S_\mathfrak3) = (1-\ehalb)(1-2\ehalb+\eonetwohalb)\\
    q_8 &:= P(s\in S_\mathfrak1 \cap S_\mathfrak2 \cap S_\mathfrak3) = \ehalb\ehalb\etwohalb.
  \end{align*}
  Then we can write \eqref{eq:ABCDEF} as (note that
  $ r = e^{\rho T}(1-q_7)$)
  \begin{align*}
    &=\binom{a+b+c+d+e+f}{a,b,c,d,e,f}
      \Big(\frac{q_1}{1-q_7}\Big)^a
      \Big(\frac{q_2}{1-q_7}\Big)^b
      \Big(\frac{q_3}{1-q_7}\Big)^c
      \Big(\frac{q_4}{1-q_7}\Big)^d
      \Big(\frac{q_5}{1-q_7}\Big)^e
      \Big(\frac{q_6}{1-q_7}\Big)^f
      \frac{q_8}{1-q_7}
    \\&=\binom{a+b+c+d+e+f}{a,b,c,d,e,f}
        \Big(\frac{q_1}{1-q_7}\Big)^{a+b}
        \Big(\frac{q_3}{1-q_7}\Big)^c
        \Big(\frac{q_4}{1-q_7}\Big)^d
        \Big(\frac{q_5}{1-q_7}\Big)^{e+f}
        \frac{q_8}{1-q_7}
    \\&=\binom{a+b+c+d+e+f}{a,b,c,d,e,f}
        \Big(\frac{(1-e^{-\rho T})(1-e^{-\rho T'})}{r}\Big)^{a+b}
        \Big(\frac{1-2e^{-\rho T} + e^{-\rho (T+T')})}{r}\Big)^{c}
    \\ & \qquad\qquad \qquad\qquad \qquad\qquad
         \cdot \Big(\frac{e^{-\rho T'}(1- e^{-\rho T})}{r}\Big)^{d}
         \Big(\frac{e^{-\rho T}(1- e^{-\rho T'})}{r}\Big)^{e+f}
         \frac{e^{-\rho (T+T')}}{r}.
  \end{align*}
  Again, before the first equal spacer, we split
  $(F_0^{\mathbbm K})_{\emptyset \subsetneq \mathbbm K\subsetneq
    \mathbbm L}$
  into the sum of new and old spacers, while for
  $(F_i^{\mathbbm K})_{\emptyset \subsetneq \mathbbm K\subsetneq
    \mathbbm L}, i\geq 1$,
  we only have old spacers. Since loss events of spacers in
  $\mathfrak 1\wedge \mathfrak 2\wedge \mathfrak 3$ are independent,
  the result follows.
\end{proof}

As in Corollary~\ref{cor:1}, it is straight-forward to translate the
last Theorem into a sampling formula, i.e.\ a formula for the
distribution of the family of random variables
$$ (F_0^{\mathbbm K})_{\emptyset \subsetneq \mathbbm K\subsetneq \mathbbm L}, 
(F_1^{\mathbbm K})_{\emptyset \subsetneq \mathbbm K\subsetneq \mathbbm
  L}, (F_2^{\mathbbm K})_{\emptyset \subsetneq \mathbbm K\subsetneq
  \mathbbm L},\dots$$

\subsection[Trees with $n$ leaves]{Trees with $n$ leaves $|\mathbbm L|=n$}
For the general case with $n$ leaves the distribution of equal spacer
sequences is based on a recursion along the tree $\mathbb T$, which we
introduce in Definition \ref{def:survival}. Throughout this section, we fix the
tree $\mathbb T$.

\begin{definition}[Survival function]
  \label{def:survival}
  Let $\mathbb T$ be a binary tree with root $r$, a set of leaves
  $\mathbb L$ and internal vertices $\mathbb V$, including $r$. Recall that there is
  a semi-order on $\mathbb T$ such that $r$ is the smallest element,
  and $\mathbb L$ is the set of maximal elements. For an internal
  branch point $t \in \mathbb V$, we denote by $t_+$ and $t^+$ two
  points infinitesimally close to $t$, pointing in the two directions
  in $\mathbb T$ leading to bigger elements.  The survival function
  $p: \mathbb T \rightarrow [0,1]$ is defined by 
  \begin{align*}
    \frac{ \partial p(t) } {\partial t}  &= \rho\,p(t) &&\text{ for } t \in \mathbb T \setminus (\mathbb L \cup \mathbb V) \\
    p(t) &= 1 - ( (1-p(t_+))  (1-p(t^+))  )  &&\text{ for } t \in \mathbb V\phantom{\setminus (\mathbb L \cup \mathbb V)}
  \end{align*}
  using the initial condition $p(t) = 1 \text{ for } t \in \mathbb L.$
  In addition, we set $q(t) := 1-p(t), t\in\mathbb T$.
\end{definition}

Roughly speaking, $p(t)$ is the probability that a spacer present at
$t \in \mathbb T$ is present in at least one of the leaves of the subtree
starting in $t$. Analogously, $q(t)$ is the
probability that a spacer present in $t$ is lost in the whole subtree
starting in $t$.  This is formalized in the next Proposition.

\begin{proposition}[probability to loose a spacer along all paths from
  $t$ to the leaves]
 \label{prop:looseall}
 Let $\mathcal S$ be defined as in Definition \ref{def:oilossT}.  Then, for a
 spacer $s \in S_t$ the probability to be absent in all
 $l\in\mathbb L$, i.e.\ to get lost along each path from $t$ to
 $\mathbb L$, is
 \begin{equation}
   \mathbf P\Big[ s \notin \bigcup\limits_{l \in \mathbb L} S_l | s \in S_t  \Big] = q(t).
 \end{equation}
\end{proposition}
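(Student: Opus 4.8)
The plan is to regard the conditional loss probability as a function on the tree, show that it satisfies verbatim the three relations that define $q$ in Definition~\ref{def:survival}, and then invoke uniqueness. Fix the spacer value $s$, and for $t\in\mathbb T$ let $\mathbb T_t$ be the subtree rooted at $t$ with leaf set $\mathbb L_t\subseteq\mathbb L$, and put
\[
 g(t) := \mathbf P\Big[\, s\notin\bigcup_{l\in\mathbb L_t}S_l \;\Big|\; s\in S_t\,\Big].
\]
This is exactly the event that $s$ is lost along every downward path from $t$ to a leaf, so it suffices to prove $g(t)=q(t)$ for all $t\in\mathbb T$. By the tree-indexed Markov property of Remark~\ref{rem:notUltra}, conditioning on $s\in S_t$ makes the fate of $s$ inside $\mathbb T_t$ independent of everything that happened at or below $t$; in particular $g(t)$ depends only on $\mathbb T_t$, and the possible presence of $s$ at leaves outside $\mathbb T_t$ is irrelevant. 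Thus, conditioned to be present at $t$, the single spacer $s$ is killed at rate $\rho$ along each branch of $\mathbb T_t$ and, at each internal vertex, its two copies are killed independently in the two descending subtrees (a Poisson thinning of the loss events, which act independently across positions and branches).

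I would then check that $g$ obeys the defining relations of $q$. (i) If $t=l\in\mathbb L$, then $\mathbb L_t=\{l\}$ and $s\in S_l$ by the conditioning, so $s$ is present and $g(l)=0=q(l)$. (ii) If $t\in\mathbb V$ is a branch point, then $s$ is present at both $t_+$ and $t^+$ (no loss occurs at the vertex itself), and by independence of the two descending subtrees $s$ is absent from all of $\mathbb L_t$ iff it is absent from the leaves above $t_+$ and from those above $t^+$; this gives $g(t)=g(t_+)\,g(t^+)$, matching $q(t)=(1-p(t_+))(1-p(t^+))$ once $g=q$ is known on the two subtrees. (iii) On the interior of a branch, for $t<t'$ with no branch point in between and $\Delta:=d(t,t')$, a first-step decomposition over the event that $s$ survives from $t$ to $t'$ (probability $e^{-\rho\Delta}$) yields $g(t)=(1-e^{-\rho\Delta})+e^{-\rho\Delta}g(t')$, equivalently $1-g(t)=e^{-\rho\Delta}\,(1-g(t'))$, which is the integrated form of $\partial_t(1-g)=\rho\,(1-g)$, i.e.\ the ODE defining $p=1-q$.

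Finally I would run a backward induction from the leaves down to the root: relations (i)--(iii) determine $g$ uniquely on $\mathbb T$ and are satisfied verbatim by $q$, hence $g=q$, which is the assertion. The main obstacle is entirely the probabilistic bookkeeping behind the first two paragraphs: justifying that conditioning on $s\in S_t$ restarts the loss process freshly on $\mathbb T_t$, and that the two descending subtrees at each internal vertex evolve independently. Both follow from the independence of the loss events across positions and branches together with the tree-indexed Markov property, but they are the steps that require care; once this independence is in place, relations (i)--(iii) are routine.
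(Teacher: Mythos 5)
Your proof is correct and takes essentially the same route as the paper's: the paper's (much terser) proof also identifies the conditional loss probability with $q$ by checking it satisfies the defining relations of the survival function --- independence of the two descending subtrees at internal vertices giving the product $(1-p(t_+))(1-p(t^+))$, exponential survival along edges, and the leaf condition --- and concludes ``directly from the definition of $p$.'' Your explicit first-step decomposition along edges, backward induction for uniqueness, and the subtree reading of the event (absence at the leaves above $t$, which is what the paper intends and uses via $q^\ast$ in Theorem~\ref{T:n}) simply spell out rigorously what the paper asserts in one line.
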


\begin{proof}
  The result follows directly from the definition of $p$.
  Note, however, that if $p(t_+)$ and $p(t^+)$
  are the probabilities to lose a spacer at least once in the subtree
  of $t_+$ and $t^+$, the probability to not lose it at all on the
  subtree of $t$ is $(1-p(t_+))(1-p(t^+))$.
  This explains the form of $p(t)$ for $t\in\mathbb V$.
  See also Proposition 5.2 in \cite{BaumdickerHessPfaffelhuber2010}.
\end{proof}

For $n > 3$ the tree topology may differ between trees of the same
size.  It is hence more involved to compute the distribution of spacer
sequences for a given tree $\mathbbm T$.  To formulate the result for
the general distribution of equal spacer sequences in $n$ leaves we
need further notation.

\begin{definition}
  For a tree $\mathbb T$ with root $r$,
  $n-2$ further
  internal vertices $v \in \mathbb V$, and
  a set of $n$ leaves,
  $l \in \mathbb L = \{\mathfrak 1,...,\mathfrak n\}$, 
  we define for any
  $\mathbb K \subseteq \mathbb L$ (we use the semi-order $\leq$ also
  set-valued, meaning that $t\leq \mathbb K$ means $t\leq l$ for all
  $l\in\mathbb K$)
  \begin{align*}
    v_{\mathbbm K} 
    & := \sup\{t\in\mathbb T: t\leq \mathbb K\}
      && \text{is the MRCA of all leaves in $\mathbbm K$,}\\
      \mathbb T_{\mathbb K} & := \{t\in \mathbb T: v_{\mathbb K} \leq t \leq \mathbb K\}
                              && \text{is the subtree spanning $\mathbb K$,}\\
                              \mathbb T_{\mathbb K}^r & := \{t\in \mathbb T: r\leq t \leq \mathbb K\}
                                                        && \text{is the same subtree, but including $[r,v_{\mathbb K})$.}
                                                        \intertext{For $v\in\mathbb V$, set}
                                                        \mathbb T^v & := \{t\in \mathbb T: t\geq v\},
                                                                      &&\text{the subtree above $v$,}\\
                                                                      \mathbb T^v_\mathbb K & := \mathbb T^v \cap \mathbb T_{\mathbb K}^r,
                                                                                              &&\text{and}\\
                                                                                              v_\downarrow & := \max\{w\in \mathbb V: w< v\}
												&&\text{the internal vertex prior to $v$ for $v \in \mathbb V$, $v\neq r.$}
  \end{align*}
  Moreover, let
  $\mathbb S\subseteq \mathbb T$ be a subtree. We define its length by
  $\lambda (\mathbb S)$. Note that $\mathbb T\setminus \mathbb S$ is a
  forest, which again consists of trees. So, we identify
  $\mathbb T\setminus \mathbb S$ with this set of trees. For such a
  tree $\mathbb S' \in \mathbb T\setminus \mathbb S$, we set
  $$q^\ast(\mathbb S') := q(\inf\{t\in \mathbb S'\}),$$
  which -- from Proposition~\ref{prop:looseall} -- is the probability
  that a spacer present at the root of $\mathbb S'$ is absent in all
  leaves of $\mathbb S'$.
\end{definition}

\begin{theorem}[The distribution of equal spacer sequences in $n$ leaves\label{T:n}]
  For $\mathbbm L = \{\mathfrak 1, \mathfrak 2,\dots, \mathfrak n\}$, let
  $\mathcal M := \mathcal P(\mathbb L) \setminus \{\mathbb L\}$ be the
  set of true subsets of $\mathbb L$. Let
  $(A^{\mathbbm K})_{\emptyset\subsetneq \mathbbm K \subsetneq
    \mathbbm L}, (A_1^{\mathbbm K})_{\emptyset\subsetneq \mathbbm K
    \subsetneq \mathbbm L}, (A_2^{\mathbbm K})_{\emptyset\subsetneq
    \mathbbm K \subsetneq \mathbbm L},\dots$
  be iid tuples of random variables with joint distribution
  \begin{equation}
    \label{eq:general}
    P( (A^{\mathbbm K} = a^{\mathbbm K})_{\mathbbm K \in \mathcal M} )
    = p(r)^{-(1+\sum\limits_{\mathbbm K \in \mathcal M} a^{\mathbbm K})}
    e^{-\rho \lambda(\mathbbm L)}
    \binom{\sum\limits_{\mathbbm K \in \mathcal M} a^{\mathbbm K}}{(a^{\mathbbm K})_{\mathbbm K \in \mathcal M}}
    \cdot \prod_{\mathbb K \in \mathcal M} (p_{\mathbb K}^r)^{a_{\mathbbm K}}
  \end{equation}
  with 
  \begin{align*}
    p_{\mathbb K}^v = e^{-\rho \lambda(\mathbb T^v_\mathbbm K)}
    \prod\limits_{ \mathbb S \in \mathbbm T^v \setminus \mathbbm T^v_{\mathbbm K}} 
    q^\ast(\mathbb S),
  \end{align*}
  the probability that a spacer gained at $v\in\mathbb V$ is
  present in $\mathbb K$ but absent in $\mathbb L\setminus \mathbb K$.
  In addition, let
  \begin{align*}
    C^{\mathbbm K} \stackrel d = \text{Poi}\left(    \frac\theta\rho \sum_{v \in (r; v_{\mathbb K}]\cap \mathbb V} 
    (1-e^{-\rho d(v,v\downarrow)})
    p_{\mathbb K}^v\right)
  \end{align*}
  be independent. Then,
  $$ (F_0^{\mathbbm K})_{\emptyset \subsetneq \mathbbm K\subsetneq \mathbbm L}, 
  (F_1^{\mathbbm K})_{\emptyset \subsetneq \mathbbm K\subsetneq
    \mathbbm L}, (F_2^{\mathbbm K})_{\emptyset \subsetneq \mathbbm
    K\subsetneq \mathbbm L},\dots$$ are independent with
  \begin{align*}
    (F_0^{\mathbbm K})_{\emptyset \subsetneq \mathbbm K\subsetneq \mathbbm L} 
    & \stackrel d = (C^{\mathbbm K} + A^{\mathbbm K})_{\emptyset \subsetneq \mathbbm K\subsetneq \mathbbm L},
    \\ 
    (F_i^{\mathbbm K})_{\emptyset \subsetneq \mathbbm K\subsetneq \mathbbm L} 
    & \stackrel d = (A_i^{\mathbbm K})_{\emptyset \subsetneq \mathbbm K\subsetneq \mathbbm L}, \qquad i=1,2,...
  \end{align*}
\end{theorem}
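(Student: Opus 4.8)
The plan is to follow the same two-part decomposition used in the proofs of Theorems~\ref{T:1} and~\ref{T:2}, now carried out along a general binary tree. Conditioning on $\mathbb T$, I split every spacer that is visible in some leaf into \emph{old} spacers, already present at the root $r$ (which we take to be the MRCA $v_{\mathbb L}$ of all leaves, so $S_r\sim\pi$), and \emph{new} spacers, gained strictly above $r$. Because $S_r$ is drawn from the equilibrium $\pi$ and the gain events form an independent Poisson process along the branches, these two pools are independent, and I treat them separately. The key structural fact, exactly as in the two- and three-leaf cases, is that every new spacer is inserted after $r$ and is therefore more leader-ward than any equal spacer (all of which are old); consequently new spacers can only contribute to the block $(F_0^{\mathbb K})$ before the first equal spacer, which is what produces the asymmetry $F_0^{\mathbb K}\stackrel d=C^{\mathbb K}+A^{\mathbb K}$ versus $F_i^{\mathbb K}\stackrel d=A_i^{\mathbb K}$ for $i\geq 1$.

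For the new spacers I would use the marking/thinning theorem for Poisson processes. A spacer gained on the branch below an internal vertex $v$, at distance $s$ from $v$, survives to $v$ with probability $e^{-\rho s}$ and then, from $v$, is present in exactly $\mathbb K$ and absent in $\mathbb L\setminus\mathbb K$ with probability $p_{\mathbb K}^v=e^{-\rho\lambda(\mathbb T^v_{\mathbb K})}\prod_{\mathbb S\in\mathbb T^v\setminus\mathbb T^v_{\mathbb K}}q^\ast(\mathbb S)$; here the first factor is the no-loss probability along the branches leading to $\mathbb K$ and the product, via Proposition~\ref{prop:looseall}, is the probability of being lost in every dangling subtree off the $\mathbb K$-path. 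To reach all of $\mathbb K$ the gain must lie at or below $v_{\mathbb K}$, so integrating $\theta e^{-\rho s}$ over each branch ending in $v\in(r;v_{\mathbb K}]\cap\mathbb V$ and summing yields the Poisson mean $\frac\theta\rho\sum_{v}(1-e^{-\rho d(v,v_\downarrow)})p_{\mathbb K}^v$ claimed for $C^{\mathbb K}$; independence across $\mathbb K$ is the independence of disjoint Poisson marks. No new equal spacers arise, since a gain above $r$ cannot reach every leaf, which is why there is no $C^{\mathbb L}$.

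For the old spacers I would run the die-roll argument of the earlier proofs, now with $2^n$ faces: moving along the ordered spacers of $S_r$, each one independently lands in a class $\mathbb K$ with probability $p_{\mathbb K}^r$ for $\emptyset\subsetneq\mathbb K\subseteq\mathbb L$, or is lost in all leaves with probability $q(r)$, where by Proposition~\ref{prop:looseall} these probabilities sum to one and $\sum_{\emptyset\subsetneq\mathbb K\subseteq\mathbb L}p_{\mathbb K}^r=1-q(r)=p(r)$. Counting the occurrences $(a^{\mathbb K})_{\emptyset\subsetneq\mathbb K\subsetneq\mathbb L}$ of each visible intermediate class before the first ``present in all of $\mathbb L$'' outcome (probability $p_{\mathbb L}^r=e^{-\rho\lambda(\mathbb L)}$), and summing out the invisible lost-in-all class, gives a multinomial-before-first-success expression. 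The summation over the number of lost-in-all rolls is carried out by the multivariate analogue of~\eqref{eq:sumx}, namely $\sum_{k\geq 0}\binom{N+k}{N}q(r)^k=(1-q(r))^{-(N+1)}$ with $N=\sum_{\mathbb K}a^{\mathbb K}$; this is precisely what turns the raw multinomial weight into the factor $p(r)^{-(1+\sum a^{\mathbb K})}e^{-\rho\lambda(\mathbb L)}$ and reproduces~\eqref{eq:general}. Combining the two pools, independence of $(F_i^{\mathbb K})_i$ across $i$ follows because distinct old spacers roll independently and the new-spacer marks are independent of them.

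The main obstacle I anticipate is bookkeeping rather than a new idea: verifying that the recursively defined survival function of Definition~\ref{def:survival} really produces the class probabilities $p_{\mathbb K}^r$ in the factorized form $e^{-\rho\lambda(\mathbb T^v_{\mathbb K})}\prod q^\ast(\mathbb S)$, and in particular checking the normalization $\sum_{\emptyset\subsetneq\mathbb K\subseteq\mathbb L}p_{\mathbb K}^r=p(r)$ by induction on the tree, so that the single clean identity~\eqref{eq:sumx} suffices to collapse the sum over the invisible category. Getting the index sets $\mathbb T^v_{\mathbb K}$ and the dangling forest $\mathbb T^v\setminus\mathbb T^v_{\mathbb K}$ exactly right for every $v$ and every $\mathbb K$, so that the branch integrals for $C^{\mathbb K}$ assemble into the stated Poisson means, is where the care is needed; none of it, however, requires more than the tools already used for $n=2$ and $n=3$.
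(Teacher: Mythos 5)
Your proposal is correct and follows essentially the same route as the paper's proof: the same split into old spacers (present at the root) handled by the multinomial die-roll argument with the lost-in-all class summed out via the multivariate analogue of~\eqref{eq:sumx}, and new spacers handled by Poisson thinning along the branches of $(r;v_{\mathbb K}]$ with the factor $p_{\mathbb K}^v$, exactly as the paper does by extending the arguments of Theorems~\ref{T:1} and~\ref{T:2}. Your write-up is in fact more explicit than the paper's (which compresses the multinomial summation and the independence of the blocks $(F_i^{\mathbb K})_i$ into references to the earlier proofs), but there is no substantive difference in approach.
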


\begin{proof} 
 We use the same reasoning as in the proof of Theorem \ref{T:2}.
 Moving along the old spacers in $S_r$ that are present in the root of $\mathbbm T$, we have to distinguish $2^n-1$ cases, one for each $\mathbbm K \in \mathcal M$.
 The number $a^{\mathbbm K}$ corresponds to the number of spacers that have been lost in all $l \in \mathbbm L \setminus \mathbbm K$ and kept otherwise.
 The probability for a single spacer $s \in S_r$ to end up
 in the correct subset of $\mathbbm L$  is given by 
 \begin{equation*}
    P( s \in \bigcap\limits_{k \in \mathbbm K} S_k \setminus \bigcup\limits_{l \in \mathbbm L \setminus \mathbbm K} S_l | s \in S_r ) =
      p_{\mathbb K}^r
 \end{equation*}
 Just as in Theorem \ref{T:2} we are not interested in the spacers $s \notin \bigcup\limits_{l \in \mathbbm L} S_l$ that have been lost in all leaves and 
 \begin{equation*}
     P(s \in \bigcup\limits_{l \in \mathbbm L} S_l | s \in S_r) = p(r).
 \end{equation*}
 Finally we wait until the corresponding $2^n-1$-sided die falls on the side for an equal spacer with probability
 \begin{equation*}
     P(s \in \bigcap\limits_{l \in \mathbbm L} S_l | s \in S_r) = e^{-\rho \lambda(\mathbbm L)}.
 \end{equation*}
 To set everything together we count
 all possible combinations that end up in $(A^{\mathbbm K} = a^{\mathbbm K})_{\mathbbm K \in \mathcal M}$.
 This gives the multinomial coefficent in equation \eqref{eq:general} and we are done.
 
 For new spacers that have been gained along $\mathbbm T$ note that the number of 
 new spacers found in $\mathbbm K$ and $\mathbbm K' \neq \mathbbm K$ are independent Poisson random variables.
 Hence, we can first calculate the expected number of new spacers in $S_v$ that are gained between the two ancestral nodes $v_{\downarrow}$ and $v$ of $\mathbbm K$. 
 The probability that a spacer arises between $v_\downarrow$ and $v$,
  and is still present in $v$, is
  \begin{align*}
    \int_0^{d(v,v_\downarrow)} \theta e^{-\rho t}  dt = \frac\theta\rho (1-e^{\rho d(v,v_\downarrow)})
  \end{align*}

 To complete the result sum over all $v \in (r,v_{\mathbbm K}] \cap \mathbb V$ and multiply the summand with the probabilty 
 \begin{equation*}
  P( s \in \bigcap\limits_{k \in \mathbbm K} S_k \setminus \bigcup\limits_{l \in \mathbbm L \setminus \mathbbm K} S_l | s \in S_v )
  = p_{\mathbb K}^v
 \end{equation*}

 \end{proof}

\section{Estimating parameters} 
\label{S:4}
Our main results, Theorems~\ref{T:1} and~\ref{T:2}, are directly
applicable for maximum likelihood estimation of gain and loss
parameters. However, some care must be taken in order to produce
reliable results. In practice, several difficulties arise for a direct
application:
\begin{enumerate}
\item There is only a finite number of spacers:\\
  In our model, we assume that the observed equal spacers $i=1,...,m$
  are a finite subset of an infinite number of joint spacers. Using
  (simulated or real) data, the number of spacers is limited.
  In particular, under the ordered independent loss model we can compute 
  the likelihood restricted to the first $m$ equal spacers common to everyone.
  In contrast, in reality the number of equal spacers is a poisson distributed random variable $M$.
  The model could account for this fact by assuming a finite number of spacers in the ancestral species, but 
  in this case we would lose the regenerative property.
\item The phylogeny must be given in advance:\\
  While \cite{Kupczok2013} estimate the phylogeny (precisely, the time
  to the MRCA in a sample of size two) together with the spacer
  insertion/deletion ratio, we assume that the true phylogeny of
  the spacer arrays is known. In practice this phylogeny needs to be
  inferred from data, e.g.\ from a multiple alignment of the cas-genes
  or of conserved genomic regions.
\item There are more than three leaves:\\
  Theorems~\ref{T:1} and~\ref{T:2} give likelihoods only for trees
  with two and three leaves, respectively. If one wants to study
  larger samples, the corresponding full likelihood is computationally
  expensive.
  In order to directly
  apply our results, we will only use sample sizes of at most three in
  the sequel. 
\end{enumerate}

Often, insertion and loss rates can only be estimated relative to each
other, e.g. by estimating the ratio of both rates, see e.g.\
\cite{BaumdickerPfaffelhuberHess2012}. 
In contrast, for the ordered CRISPR spacer arrays, we can estimate the
spacer loss rate $\rho$ without any information about the insertion
rate $\theta$, if we restrict ourselves to spacers older than the
first common spacer; see Theorem \ref{T:1} and Theorem \ref{T:2}.
Having estimated $\rho$, the gene gain rate can e.g.\ be estimated by
observing that the average number of spacers is $\theta/\rho$ in an
equilibrium situation; see Remark~\ref{rem:more}.

In the following, we focus on the first step of estimating $\rho$ from
the spacer arrays $S_{\mathfrak 1}, S_{\mathfrak 2}$ in the case of a
sample of size~2 and from
$S_{\mathfrak 1}, S_{\mathfrak 2}, S_{\mathfrak 3}$ in the case of a
sample of size~3.

\subsection{Estimating $\rho$ based on samples of size two}
\noindent
Turning to the statistical problem of estimating $\rho$, we will use
the notion of an exponential family. Recall that a family of
distributions $\mathbbm P_\rho$ is called a $k$-parameter exponential family,
if, for some $\mathbb N^m$-valued $X \sim \mathbbm P_\rho$, it has the
form
$$ \mathbbm P_\rho(X = x) = h(x) \exp(\sum\limits_{i = 1}^k  c_i(\rho) t_i(x) - d(\rho)),$$
for some functions $h$, $c = (c_1,...,c_k), t = (t_1,...,t_k)$ and
$d$. Notice that $t$ is called the (minimal) sufficient statistics for
$(\mathbbm P_\rho)_\rho$, i.e.\ maximum likelihood inference about
$\rho$ depends on $x$ only through $t(x)$.

Assume we have two spacer arrays. To estimate the spacer
deletion rate using a maximum likelihood approach rate we define:

\begin{definition}[number of (un)equal spacers in samples of size
  $n=2$]\label{alg:1}
  Let $S_{\mathfrak 1},S_{\mathfrak 2}$ 
  be given. Calculate
  $$ M := |S_{\mathfrak 1} \cap S_{\mathfrak 2}|,$$
  the total number of equal spacers in $\mathfrak 1$ and $\mathfrak 2$
  and
  $$D = \sum_{i=2}^M (V_i-V_{i-1}) + (W_i - W_{i-1}) = V_M - V_1 + W_M-W_1 $$
  with
$
    V_i  := E_{i}^{\mathfrak 1, \{\mathfrak 1, \mathfrak 2\}} \text{and }
    W_i := E_{i}^{\mathfrak 2, \{\mathfrak 1, \mathfrak 2\}}.
$

\end{definition}

\begin{theorem}[Sufficient statistics and maximum likelihood estimator for $\rho$\label{T:suff}]
  \quad \\ a) Let $m\geq 2$. The joint distribution of
  $\Delta V_i := V_i-V_{i-1}, \Delta W_i := W_i - W_{i-1}$,
  $i=2,\dots,m$ only depends on $\rho$ and is a single-parameter
  exponential family with
  $$ t(\Delta v_2,\dots,\Delta v_m, \Delta w_2,\dots,\Delta w_m) = 
  \sum_{i=2}^m \Delta v_i + \Delta w_i = (v_m-v_1) + (w_m-w_1).$$
  \\
  b)
  Let $D$ and $M$ be fixed to $d,m \geq 0$ respectively.
  Then, the maximum likelihood estimator of $\rho$ is given by 
  \begin{equation}\label{eq:totalprob2}
    \rho^\ast 
    = \mathop{argmax}\limits_{\rho} \left[(M-1)\log\bigg(\frac{e^{-\rho T}}{2-e^{-\rho T}} 
    \bigg) + D \log\bigg(\frac{1-e^{-\rho T}}{2 - e^{-\rho T}}
    \bigg)\right]
  \end{equation}
  and can be computed
explicitly:
  \begin{align}
    \label{eq:rhoast}
    \rho^\ast = -\frac{\log p^\ast}{T} \qquad \text{with}\qquad 
    p^\ast = \bigg(1 + \frac{d}{2(m-1)}\bigg)^{-1}
  \end{align}
\end{theorem}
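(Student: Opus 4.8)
The statement has two parts: part (a) identifies the sufficient statistic by recognizing the likelihood as a single-parameter exponential family, and part (b) solves the resulting one-dimensional optimization explicitly. I will treat them in order, since the exponential-family form from (a) is exactly what feeds into the likelihood in (b).

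\medskip
\noindent\textbf{Part (a).} The plan is to read off the joint distribution of $(\Delta V_i, \Delta W_i)_{i=2}^m$ directly from Theorem~\ref{T:1}, which tells us these increments are iid copies of $(A_i, B_i)$ with the distribution in~\eqref{eq:AB}. Writing $\beta := e^{-\rho T}/(2-e^{-\rho T})$ and $x := (1-e^{-\rho T})/(2-e^{-\rho T})$ from~\eqref{def:x}, the joint mass factorizes as a product over $i=2,\dots,m$ of
$$
\binom{\Delta v_i+\Delta w_i}{\Delta v_i}\,\beta\, x^{\Delta v_i+\Delta w_i}.
$$
Collecting terms, the full likelihood is
$$
h(\Delta v, \Delta w)\cdot \beta^{\,m-1}\cdot x^{\sum_{i=2}^m(\Delta v_i+\Delta w_i)},
$$
where $h$ is the product of binomial coefficients and does not depend on $\rho$. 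Taking logarithms puts this in the canonical form $h(x)\exp(c(\rho)t(x)-d(\rho))$ with $c(\rho)=\log x$, the carrier $d(\rho)=-(m-1)\log\beta$, and sufficient statistic $t=\sum_{i=2}^m(\Delta v_i+\Delta w_i)$. The telescoping identity $\sum_{i=2}^m \Delta v_i = v_m-v_1$ (and likewise for $w$) gives the stated closed form $t=(v_m-v_1)+(w_m-w_1)$. This is the claimed single-parameter exponential family.

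\medskip
\noindent\textbf{Part (b).} Here I substitute the observed statistics: the telescoped sum is exactly $D$, and there are $m-1=M-1$ independent increment-pairs, so the $\rho$-dependent part of the log-likelihood is precisely the bracketed expression in~\eqref{eq:totalprob2}. I would then differentiate with respect to $\rho$, or more cleanly reparametrize via $p := e^{-\rho T}$ so that $\beta = p/(2-p)$ and $x=(1-p)/(2-p)$; the objective becomes
$$
\ell(p) = (M-1)\log\frac{p}{2-p} + D\log\frac{1-p}{2-p}.
$$
Setting $\ell'(p)=0$ and clearing denominators reduces to a linear equation in $p$, whose solution is $p^\ast = \bigl(1+\tfrac{d}{2(m-1)}\bigr)^{-1}$; inverting $p=e^{-\rho T}$ yields $\rho^\ast=-\log p^\ast/T$. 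I would finish by checking the second-order condition (or that $\ell$ is concave on $(0,1)$ after the reparametrization) to confirm this critical point is the maximizer.

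\medskip
\noindent I expect the only mild obstacle to be the algebra in part (b): the three $\log(2-p)$ terms must be combined carefully so the stationarity condition collapses to a genuinely linear equation rather than a higher-degree one. The exponential-family recognition in part (a) is essentially immediate once Theorem~\ref{T:1} is invoked, and the telescoping is routine.
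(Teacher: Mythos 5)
Your proposal is correct and follows essentially the same route as the paper: part (a) reads the iid product form off Theorem~\ref{T:1} and identifies $h$, $c(\rho)=\log\frac{1-e^{-\rho T}}{2-e^{-\rho T}}$, $d(\rho)=-(m-1)\log\frac{e^{-\rho T}}{2-e^{-\rho T}}$ and the telescoping statistic exactly as in the paper, and part (b) reparametrizes with $p=e^{-\rho T}$ and solves the linear stationarity equation to get $p^\ast=\bigl(1+\tfrac{d}{2(m-1)}\bigr)^{-1}$, just as the paper does. Your added check of the second-order condition is a small refinement the paper leaves implicit (the sign of its factored derivative $\tfrac{1}{p(1-p)(2-p)}\bigl(2(M-1)(1-p)-pD\bigr)$ already shows the critical point is a maximum).
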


\begin{proof}
  a)
  Using Theorem~\ref{T:1}, we write
  $\underline{\Delta v} := (\Delta v_2, \dots, \Delta v_m)$
 and 
  $\underline{\Delta w} := (\Delta w_2, \dots, \Delta w_m)$, such that
  \begin{align*}
    \mathbbm P( & \underline{\Delta V} = \underline{\Delta v}, \underline{\Delta W} = \underline{\Delta w})
    \\ & = 
         \bigg(\prod_{i=2}^m \binom{\Delta v_i + \Delta w_i}{\Delta v_i}\bigg)
         \cdot \bigg(\frac{e^{-\rho T}}{2-e^{-\rho T}} 
         \bigg)^{m-1}
         \cdot \bigg(\frac{1-e^{-\rho T}}{2 - e^{-\rho T}}
         \bigg)^{\sum_{i=2}^m \Delta v_i + \Delta w_i}\\
                & = h(\underline{\Delta v}, \underline{\Delta w}) \exp(c(\rho) 
                  t(\underline{\Delta v}, \underline{\Delta w}) - d(\rho))
  \end{align*}
  with
  \begin{align*}
    h(\underline{\Delta v}, \underline{\Delta w}) & := \prod_{i=2}^m \binom{\Delta v_i + \Delta w_i}{\Delta v_i},\\
    c(\rho) & = \log\bigg(\frac{1-e^{-\rho T}}{2 - e^{-\rho T}}\bigg),\\
    d(\rho) & = - (m-1) \log \bigg(\frac{e^{-\rho T}}{2-e^{-\rho T}} 
         \bigg)
  \end{align*}
  and $t$ from above.
  
  ~
  
  \noindent
  b)
    Formula \eqref{eq:totalprob2} follows directly from
  Corollary \ref{cor:1}. Further, we compute for $p = e^{-\rho T}$
  \begin{align*}
    \frac{d}{dp} & (M-1)\log\Big(\frac{p}{2-p}\Big) + D\log\Big(\frac{1-p}{2-p}\Big)
                   = (M-1) \Big( \frac 1p + \frac{1}{2-p} \Big) + D \Big(\frac{1}{2-p} - \frac{1}{1-p}\Big)
    \\ & = \frac{1}{2-p}\Big(\frac {2(M-1)}{p} -  \frac{D}{1-p}\Big) 
         =  \frac{1}{p(1-p)(2-p)}\bigg(2(M-1)(1-p) - p D\bigg).
  \end{align*}
  Hence, the maximum likelihood estimator is
  $$ p^\ast = \frac{2(M-1)}{2(M-1) + D} = \bigg(1 + \frac{D}{2(M-1)}\bigg)^{-1}.$$
\end{proof}

\begin{remark}[Assuming independence leads to the same estimator for
  $n=2$\label{rem:98}]
  Recall from Remark \ref{rem:geo1} that $V_i-V_{i-1}$ (as well as
  $W_i-W_{i-1}$) is geometrically distributed with success parameter
  $p/(2-p)$ for $p=e^{-\rho T}$. Moreover, $V_M-V_1$ (as well
  as $W_M-W_1$) has a negative binomial distribution with parameters
  $M-1$ and $p$. Assuming that $V_M-V_1$ and $W_M-W_1$ are independent
  (which is clearly false in our model), $D$ has a negative binomial
  distribution with parameters $2(M-1)$ and $p$. A simple calculation
  shows that the maximum likelihood estimator of $p$ is again given by
  $p^\ast$ from \eqref{eq:rhoast}. Therefore, the maximum likelihood
  estimator for $\rho$ in the case of a sample of size~2 can also be
  obtained by assuming independence of $V_M-V_1$ and $W_M-W_1$.
\end{remark}

\subsection{Estimating $\rho$ based on samples of size three}
\noindent
For $n=3$, we again obtain an exponential family, since
$((F_i^{\mathbbm K})_{\emptyset \subsetneq \mathbbm K\subsetneq
  \mathbbm L})_{i=1,2,...}$
are independent and identically distributed according to
\eqref{eq:ABCDEF}.

\begin{theorem}[Sufficient statistics for $\rho$\label{T:suff2}]
  Let $m\geq 1$. The joint distribution of
  $((F_i^{\mathbbm K})_{\emptyset \subsetneq \mathbbm K\subsetneq
    \mathbbm L})_{i=1,...,m}$ is a 4-parameter exponential family with
  \begin{align*}
    t_1 &= \sum_{i=1}^m F_i^{\{\mathfrak 1\}} + F_i^{\{\mathfrak 2\}},\quad
    t_2 = \sum_{i=1}^m F_i^{\{\mathfrak 3\}},\quad
    t_3 = \sum_{i=1}^m F_i^{\{\mathfrak 1, \mathfrak 2\}},\quad
    t_4 = \sum_{i=1}^m F_i^{\{\mathfrak 1, \mathfrak 3\}} + F_i^{\{\mathfrak 2, \mathfrak 3\}},
  \end{align*}
  \begin{align*}
    c_1(\rho) & = \log\Big(\frac{(1-e^{-\rho T})(1-e^{-\rho T'})}{3 - e^{-\rho T'} - 
                e^{-\rho T}(2-e^{-\rho T'})}\Big), 
    && 
       c_2(\rho)  = \log\Big(\frac{1-2e^{-\rho T} + e^{-\rho (T+T')})}{3 - e^{-\rho T'} - 
       e^{-\rho T}(2-e^{-\rho T'})}\Big), \\
    c_3(\rho) & = \log\Big(\frac{e^{-\rho T'}(1- e^{-\rho T})}{3 - e^{-\rho T'} - 
                e^{-\rho T}(2-e^{-\rho T'})}\Big),
    &&
       c_4(\rho)  = \log\Big(\frac{e^{-\rho T}(1- e^{-\rho T'})}{3 - e^{-\rho T'} - 
       e^{-\rho T}(2-e^{-\rho T'})}\Big),
  \end{align*}
  \begin{align*}
    d(\rho) & = -m \log\Big(\frac{e^{-\rho (T+T')}}{3 - e^{-\rho T'} - e^{-\rho T}(2-e^{-\rho T'})}\Big).
  \end{align*}
\end{theorem}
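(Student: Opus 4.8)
The plan is to read off the exponential-family structure directly from Theorem~\ref{T:2}, in complete analogy with part a) of Theorem~\ref{T:suff}. By Theorem~\ref{T:2}, the tuples $(F_i^{\mathbb K})_{\emptyset\subsetneq\mathbb K\subsetneq\mathbb L}$ for $i=1,2,\dots$ are independent and, for $i\geq 1$, identically distributed according to \eqref{eq:ABCDEF}. Note that the $i=0$ tuple is deliberately excluded (the statistic runs over $i=1,\dots,m$ with $m\geq 1$): its law carries the extra Poisson contributions $C^{\mathbb K}$, which depend on $\theta$ and would spoil a clean one-dimensional-in-$\rho$ description. Hence the joint law of $((F_i^{\mathbb K})_{\mathbb K})_{i=1,\dots,m}$ is simply the $m$-fold product of \eqref{eq:ABCDEF}, and it suffices to rewrite this product in the canonical form $h(x)\exp(\sum_{k=1}^4 c_k(\rho)\,t_k(x)-d(\rho))$.

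First I would abbreviate, for each $i$, the six counts $F_i^{\{\mathfrak 1\}},\dots,F_i^{\{\mathfrak 2,\mathfrak 3\}}$ by $a_i,b_i,c_i,d_i,e_i,f_i$, matching the roles of $a,\dots,f$ in \eqref{eq:ABCDEF}, and take the product of the factors in \eqref{eq:ABCDEF} over $i$. Taking logarithms turns this product into a sum, the multinomial coefficients $\binom{a_i+\dots+f_i}{a_i,\dots,f_i}$ collecting into the base measure $h$. The one genuine observation is the symmetry already present in \eqref{eq:ABCDEF}: the factor attached to $a_i$ coincides with that attached to $b_i$ (both equal $(1-e^{-\rho T})(1-e^{-\rho T'})/r$), and the factor attached to $e_i$ coincides with that attached to $f_i$. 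This is exactly the coincidence $q_1=q_2$ and $q_5=q_6$ from the proof of Theorem~\ref{T:2}, reflecting that the chosen topology treats $\mathfrak 1,\mathfrak 2$ symmetrically relative to $\mathfrak 3$. Consequently the six per-coordinate exponents collapse into four natural parameters.

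Carrying out the bookkeeping, the exponent of $(1-e^{-\rho T})(1-e^{-\rho T'})/r$ in the product is $\sum_{i=1}^m(a_i+b_i)=t_1$, that of $(1-2e^{-\rho T}+e^{-\rho(T+T')})/r$ is $\sum_i c_i=t_2$, that of $e^{-\rho T'}(1-e^{-\rho T})/r$ is $\sum_i d_i=t_3$, and that of $e^{-\rho T}(1-e^{-\rho T'})/r$ is $\sum_i(e_i+f_i)=t_4$; taking the logarithm of each of these four ratios yields precisely $c_1,\dots,c_4$. The constant factor $e^{-\rho(T+T')}/r$ occurs once per $i$, so its contribution is $(e^{-\rho(T+T')}/r)^m$, whose logarithm is $m\log(e^{-\rho(T+T')}/r)=-d(\rho)$, matching the stated normalizer. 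Identifying $h(x)=\prod_{i=1}^m\binom{a_i+b_i+c_i+d_i+e_i+f_i}{a_i,b_i,c_i,d_i,e_i,f_i}$ then completes the representation.

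I do not expect a real obstacle: once Theorem~\ref{T:2} supplies the iid structure for $i\geq 1$, the argument is a direct transcription of \eqref{eq:ABCDEF} into canonical form. The only point requiring care is justifying that the statistic is genuinely four-dimensional rather than six, which rests entirely on the coincidences $q_1=q_2$ and $q_5=q_6$ read off from the three-leaf proof; one may further remark that $c_1,\dots,c_4$ are functionally independent as $\rho$ varies, so the statistic $(t_1,\dots,t_4)$ is in fact minimal.
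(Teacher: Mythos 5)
Your proposal is correct and follows exactly the route the paper takes: the paper gives no separate proof of Theorem~\ref{T:suff2}, justifying it only by the preceding remark that, by Theorem~\ref{T:2}, the tuples $(F_i^{\mathbbm K})_{\emptyset\subsetneq\mathbbm K\subsetneq\mathbbm L}$, $i\geq 1$, are iid according to \eqref{eq:ABCDEF}, after which the canonical form is read off just as in part~a) of Theorem~\ref{T:suff}. Your bookkeeping (exclusion of the $\theta$-dependent $i=0$ tuple, collapse from six to four statistics via $q_1=q_2$ and $q_5=q_6$, identification of $h$, $c_k$, $t_k$, $d$) is precisely that implicit argument, carried out in full.
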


Analogous to the case of two spacer arrays we can use the results of
Theorem \ref{T:2} and \ref{T:suff2} to build an estimator based on three spacer arrays.

\begin{definition}[loss rate estimation in samples of size
  $n=3$]\label{alg:2}
  Let $S_{\mathfrak 1}, S_{\mathfrak 2}, S_{\mathfrak 3}$ be given,
  and denote the time to the MRCA from $\mathfrak 1$ and $\mathfrak 3$
  ($\mathfrak 1$ and $\mathfrak 2$) by $T$ ($T'$) with $T\geq T'$.
  Calculate
  $$ M:=|S_{\mathfrak 1} \cap S_{\mathfrak 2} \cap S_{\mathfrak 3}|,$$
  the total number of equal spacers in $\mathfrak 1, \mathfrak 2$ and
  $\mathfrak 3$ and
  \begin{align*}
    D_1 & := \sum_{i=1}^M F_i^{\{\mathfrak 1\}} + F_i^{\{\mathfrak 2\}}, \qquad  
    & D_2 & := \sum_{i=1}^M F_i^{\{\mathfrak 3\}}, 
    \\ D_3 & := \sum_{i=1}^M F_i^{\{\mathfrak 1, \mathfrak 2\}}, \qquad  
    & D_4 & := \sum_{i=1}^M F_i^{\{\mathfrak 1, \mathfrak 3\}} + F_i^{\{\mathfrak 2, \mathfrak 3\}}.
  \end{align*}
  then, set (compare with \eqref{eq:ABCDEF})
  \begin{align*}
    \rho^\ast & = \mathop{argmax}\limits_{\rho} 
                \Big(- (M-1) \rho (T+T') 
                + D_1 \log\big((1-e^{-\rho
                T})(1-e^{-\rho T'}\big)
    \\ & \qquad \qquad \qquad \qquad \qquad 
         + D_2 \log\big(1-2e^{-\rho T} + e^{-\rho 
         (T+T')})\big)
    \\ & \qquad \qquad \qquad \qquad \qquad 
         +D_3\log\big(e^{-\rho T'}(1-
         e^{-\rho T})\big)
    \\ & \qquad \qquad \qquad \qquad \qquad 
         + D_4 \log\big(e^{-\rho
         T}(1- e^{-\rho T'})\big)
    \\ & \qquad \qquad \qquad \qquad \qquad 
         - (M-1+D_1+D_2+D_3+D_4) \log(3 - e^{-\rho T'} - e^{-\rho T}(2-e^{-\rho T'})).
  \end{align*}
\end{definition}

\noindent
Maximizing the right hand side of the last expression is hardly done
explicitly. We therefore rely on numerical optimization in our
applications.

\subsection{Comparison of estimation methods}
We used simulated data in order to compare our estimates based on
\begin{enumerate}
\item the exact likelihoods based on equal spacers
    from Definition~\ref{alg:1} for $n=2$ and Definition~\ref{alg:2}
    for $n=3$, (darkgray boxes in Figure~\ref{fig1}) and
\item the software {\em panicmage}, which is based
    on \cite{BaumdickerPfaffelhuberHess2012} and uses the spacer
    frequency spectrum in order to simultaneously estimate $\theta$
    and $\rho$ (lightgray boxes in Figure~\ref{fig1}).
\end{enumerate}
In the simulations, we consider gains and losses in equilibrium, i.e.\
the root of the tree has a Poisson number of spacers with mean
$\theta/\rho$. For the trees relating the leaves, we take the
coalescent as a null model, i.e.\ pairs of lines all coalesce
independent from each other at constant rate $1$ independently of $\rho$; see e.g.\
\cite{Wakeley2008}. This has the advantage that we can use results
from \cite{BaumdickerPfaffelhuberHess2012} in order to interpret our
results. For example, we know that in a sample of size $n=2$ (and
$n=3$), the number of spacers common to both arrays (previously called
$M$) has expectation $\frac{\theta}{\rho(1 + 2\rho)}$ (and
$\frac{\theta}{\rho(1 + 2\rho)(2+2\rho)}$).
We simulated $1000$ replicates. For each replicate a new coalescent tree was drawn.
Throughout all simulations we set $\theta = 100\rho$, such that the number of spacers 
in an array is Poisson distributed with mean $100$.

 \begin{figure}
\phantom{.}\hfill \subfloat[$n = 2$]{
  \begin{overpic}[width=0.45\textwidth,trim=1cm 1cm 1cm 1cm,clip]%
  {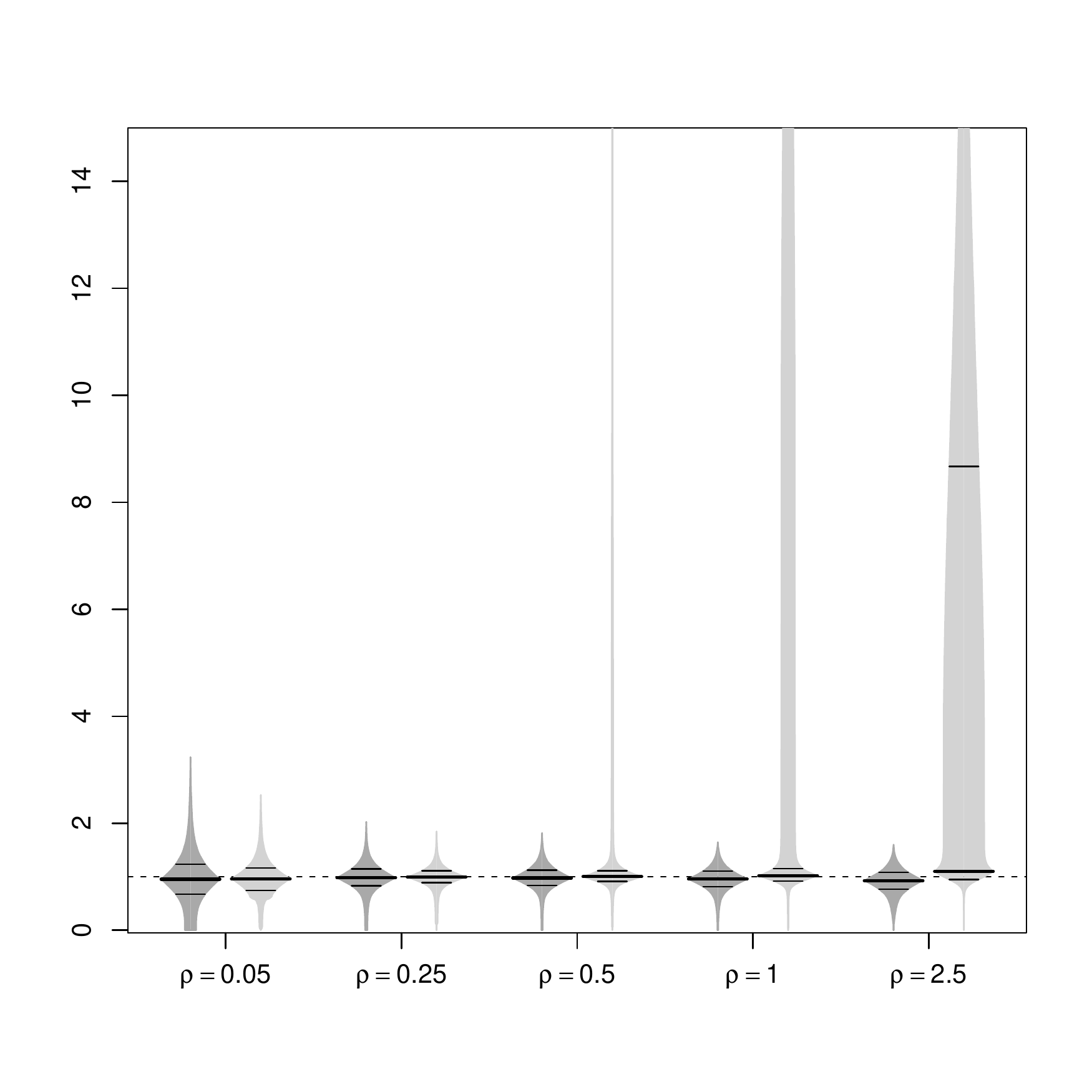}
  \put(-10,45){\rotatebox{90}{$\hat\rho/\rho$}} 
  \end{overpic}
}
\subfloat[$n = 3$]{
    \includegraphics[width=0.45\textwidth,trim=1cm 1cm 1cm 1cm,clip]{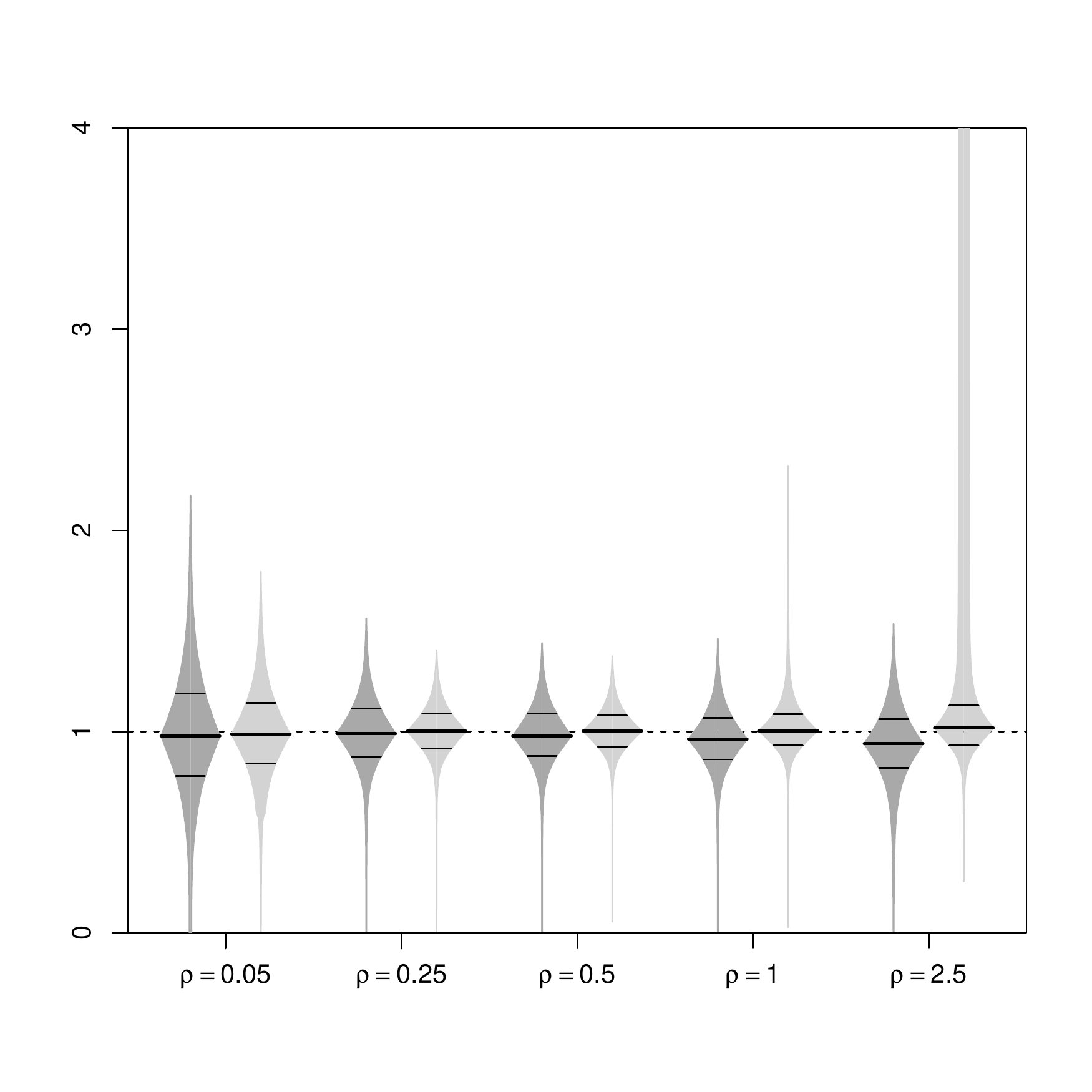}    
}
\caption{\label{fig1}Box-Percentile plots \citep{Esty2003} of estimated
  deletion rates $\hat\rho$ divided by true parameter $\rho$. The
  estimates are based on two-way comparisons (left boxes, darkgray) and the
  spacer frequency spectrum (right boxes, lightgray) for different deletion
  rates $\rho$. We use $\theta = 100\rho$ in all cases. (a) Estimates
  based on a sample of size $n=2$; see Definition~\ref{alg:1}. (b)
  Estimates based on a sample of size $n=3$; see
  Definition~\ref{alg:2}}
\end{figure}

~

As Figure \ref{fig1} shows, both estimation procedures give accurate
results (in the sense that the estimator $\hat\rho$ is close to
$\rho$, or $\hat\rho/\rho\approx 1$ in many cases), but there are some
differences. Since the equal spacer likelihood methods are based on
extensions of geometric distributions, the resulting estimators are
(slightly) biased. In contrast, the likelihood based on the spacer
frequency spectrum can be calculated from independent Poisson random
variables \citep{BaumdickerHessPfaffelhuber2010} and the corresponding estimator is thus unbiased.
Concerning the variance of the estimators, the equal spacer likelihood
methods give a slightly larger variance of the estimators for small
$\rho$ (for $n=2, \rho\leq0.25$, Fig.~\ref{fig1}(a); and for $n=3, \rho\leq0.5$, Fig.~\ref{fig1}(b)).
This might be due to the fact that this estimation procedure
only takes spacers after the first common spacer into
account. However, after the first equal spacer, most spacers are
equal for low $\rho$, such that there is less signal in the data, making estimation of $\rho$ more difficult.
Note that for larger loss rates (for $n=2, \rho\geq0.5$, Fig.~\ref{fig1}(a); and for $n=3, \rho\geq1$, Fig.~\ref{fig1}(b)), 
the estimate from {\em panicmage} has a much higher
variance. This might be due to the fact that {\em panicmage} cannot
estimate $\theta$ and $\rho$ separately and the likelihood surface is
flat in one direction in the $\theta/\rho$-plane.

\section{Discussion} 
\label{S:5}

The ordered independent loss model for CRISPR spacer evolution paves
the way for various statistical applications based on equal
spacers. However, when comparing the estimates for the spacer deletion rate with the unordered model in small sample sizes and limited deletion rates, the extra information 
of the order of spacers does not seem to produce more reliable results; see
Figure~\ref{fig1}. Nevertheless, this extra information will prove to be useful
in further applications. In particular, since spacers are ordered by time of their insertion in each
array, we can see which spacers in the arrays were present in the nodes of the phylogeny. If a
spacer is present in a specific node in the phylogeny, all subsequent
spacers were inserted before and hence must also be present within this
node. This information is only available in the ordered model and will help to understand
phenomena such as non-constant insertion rates or horizontal transfer.

The ordered independent loss model, as presented above, assumes constant insertion and deletion rates.
The separate estimation of spacer insertion and deletion rates allows to investigate each mechanism independently.
Besides selective effects, 
the insertion of spacers is more strongly dependent on the environment of the population than the random deletion of spacers is.
The insertion might hence be time-dependent \citep{Hynes2016applicationsofCRISPRinnature}.
In contrast, it is more reasonable to assume that the deletion of a neutral spacer occurs at constant rate.
To test this hypothesis in future statistical work, we therefore aim at methods based on equal spacer distances.
This will allow us to infer deletion rate variation between different branches in the given phylogeny,
different CRISPR-Cas systems or positions in the spacer array.

To apply the presented approach to genmic data, the CRISPR spacer arrays and a reconstructed phylogeny are necessary.
In addition the equal spacers among all the CRISPR arrays need to be identified.
There are several CRISPR prediction tools that can be used to scan for spacer arrays in genomic data \citep{Bland2007,Edgar2007,Grissa2007finder}.
For archaeal and bacterial reference genomes predicted spacers are available online \citep{Grissa2007database}.
While a typical array has less than 20 spacers, there are also arrays with several hundred spacers.

The ordered independent loss model will be useful to study the evolutionary ecology of CRISPR-Cas systems.
For an extensive review of the evolution and ecology of CRISPR we refer the reader to \cite{Westra2016}.
Here we will highlight the role of spacer deletions in CRISPR-enhanced gene drives and metagenomic studies concerning CRISPR.
Gene drive is the dependency of the offspring distribution of an individual on its genetic background. As such, this concept in concert with CRISPR
can be used for the artificial alterations of natural populations \citep{Burt2003,Hynes2016applicationsofCRISPRinnature}.
However,  \cite{Unckless2017} argue that in natural populations resistance to a gene drive will likely occur.
As a counter measure, different improvements of the CRISPR gene drive have been proposed \citep{Esvelt2014}.
One suggestion is to use a CRISPR gene drive with more than one spacer, such that resistance can not evolve so easily.
The effect of losing spacers in such a gene drive should be considered carefully before using any designed CRISPR system in natural populations.
The same is true for the idea to "vaccinate" bacteria possessing CRISPR via defective phage and plasmid sequences \citep{Hynes2014,Hynes2016programmingCRISPR}.
Since any artificial CRISPR spacer insertion is transmitted to the offspring we should assess the fixation probabilities and extinction times
of vaccinated bacterial strains before releasing them into natural environments.

To better understand the dynamics of CRISPR and the corresponding phages in natural populations 
metagenomic studies have been used.
Using the sequenced CRISPR spacer arrays, the ancestral environment of the population can be inferred \citep{Sun2015}.
In addition, different lineages can be distinguished, even for highly related bacterial strains \citep{Kunin2008}.
Since the ordered independent loss model is based on a genealogical tree connecting the individuals, it cannot directly be used for metagenomic data.
Nonetheless, the model provides a theoretical framework to have a more detailed look at the evolutionary pattern in the spacer array.
In particular, the often observed conservation of old spacers at the trailer-end of the array is of interest \citep{Weinberger2012}.
In the ordered independent loss model conservation of spacers correspond to small equal spacer distances, i.e. a low spacer deletion rate $\rho$.
For large loss rates the number of spacers between equal spacers rises until finally all observed spacers differ.
In a non-neutral setting with selection simulations of CRISPR arrays and the viral population show that the distances between equal spacers still increase for high spacer deletion rates.
To which amount the CRISPR trailer-end is conserved due to selective forces compared to the effect of a low deletion rate remains a future challenge.

\subsubsection*{Acknowledgments}
We thank Omer Alkhnbashi and Rolf Backofen for fruitful discussion
and two anonymous referees for their constructive comments and hints.
We thank an anonymous referee to encourage us to give our most general result, Theorem~3.
This research was supported by the DFG through the priority program
SPP1590. In particular, FB is funded in parts through Pf672/9-1.

\section*{Bibliography}

\bibliography{crispr}

\begin{thebibliography}{42}
\expandafter\ifx\csname natexlab\endcsname\relax\def\natexlab#1{#1}\fi
\providecommand{\url}[1]{\texttt{#1}}
\providecommand{\href}[2]{#2}
\providecommand{\path}[1]{#1}
\providecommand{\DOIprefix}{doi:}
\providecommand{\ArXivprefix}{arXiv:}
\providecommand{\URLprefix}{URL: }
\providecommand{\Pubmedprefix}{pmid:}
\providecommand{\doi}[1]{\href{http://dx.doi.org/#1}{\path{#1}}}
\providecommand{\Pubmed}[1]{\href{pmid:#1}{\path{#1}}}
\providecommand{\bibinfo}[2]{#2}
\ifx\xfnm\relax \def\xfnm[#1]{\unskip,\space#1}\fi
\bibitem[{Barrangou et~al.(2007)Barrangou, Fremaux, Deveau, Richards, Boyaval,
  Moineau, Romero and Horvath}]{Barrangou2007}
\bibinfo{author}{Barrangou, R.}, \bibinfo{author}{Fremaux, C.},
  \bibinfo{author}{Deveau, H.}, \bibinfo{author}{Richards, M.},
  \bibinfo{author}{Boyaval, P.}, \bibinfo{author}{Moineau, S.},
  \bibinfo{author}{Romero, D.A.}, \bibinfo{author}{Horvath, P.},
  \bibinfo{year}{2007}.
\newblock \bibinfo{title}{{CRISPR provides acquired resistance against viruses
  in prokaryotes.}}
\newblock \bibinfo{journal}{Science} \bibinfo{volume}{315},
  \bibinfo{pages}{1709--1712}.
\bibitem[{Baumdicker et~al.(2010)Baumdicker, Hess and
  Pfaffelhuber}]{BaumdickerHessPfaffelhuber2010}
\bibinfo{author}{Baumdicker, F.}, \bibinfo{author}{Hess, W.},
  \bibinfo{author}{Pfaffelhuber, P.}, \bibinfo{year}{2010}.
\newblock \bibinfo{title}{The diversity of a distributed genome in bacterial
  populations}.
\newblock \bibinfo{journal}{The Annals of Applied Probability}
  \bibinfo{volume}{20}, \bibinfo{pages}{1567--1606}.
\bibitem[{Baumdicker et~al.(2012)Baumdicker, Pfaffelhuber and
  Hess}]{BaumdickerPfaffelhuberHess2012}
\bibinfo{author}{Baumdicker, F.}, \bibinfo{author}{Pfaffelhuber, P.},
  \bibinfo{author}{Hess, W.}, \bibinfo{year}{2012}.
\newblock \bibinfo{title}{The infinitely many genes model for the distributed
  genome of bacteria}.
\newblock \bibinfo{journal}{Genome Biology and Evolution} \bibinfo{volume}{4},
  \bibinfo{pages}{443--456}.
\bibitem[{Benjamini and Peres(1994)}]{benjamini}
\bibinfo{author}{Benjamini, I.}, \bibinfo{author}{Peres, Y.},
  \bibinfo{year}{1994}.
\newblock \bibinfo{title}{Markov chains indexed by trees}.
\newblock \bibinfo{journal}{The Annals of Probability} \bibinfo{volume}{22},
  \bibinfo{pages}{219--243}.
\bibitem[{Bland et~al.(2007)Bland, Ramsey, Sabree, Lowe, Brown, Kyrpides and
  Hugenholtz}]{Bland2007}
\bibinfo{author}{Bland, C.}, \bibinfo{author}{Ramsey, T.L.},
  \bibinfo{author}{Sabree, F.}, \bibinfo{author}{Lowe, M.},
  \bibinfo{author}{Brown, K.}, \bibinfo{author}{Kyrpides, N.C.},
  \bibinfo{author}{Hugenholtz, P.}, \bibinfo{year}{2007}.
\newblock \bibinfo{title}{{CRISPR Recognition Tool (CRT): a tool for automatic
  detection of clustered regularly interspaced palindromic repeats}}.
\newblock \bibinfo{journal}{BMC Bioinformatics} \bibinfo{volume}{8},
  \bibinfo{pages}{209}.
\bibitem[{Bolotin et~al.(2005)Bolotin, Quinquis, Sorokin and {Dusko
  Ehrlich}}]{Bolotin2005}
\bibinfo{author}{Bolotin, A.}, \bibinfo{author}{Quinquis, B.},
  \bibinfo{author}{Sorokin, A.}, \bibinfo{author}{{Dusko Ehrlich}, S.},
  \bibinfo{year}{2005}.
\newblock \bibinfo{title}{{Clustered regularly interspaced short palindrome
  repeats (CRISPRs) have spacers of extrachromosomal origin}}.
\newblock \bibinfo{journal}{Microbiology} \bibinfo{volume}{151},
  \bibinfo{pages}{2551--2561}.
\bibitem[{Burt(2003)}]{Burt2003}
\bibinfo{author}{Burt, A.}, \bibinfo{year}{2003}.
\newblock \bibinfo{title}{{Site-specific selfish genes as tools for the control
  and genetic engineering of natural populations.}}
\newblock \bibinfo{journal}{Proceedings of the Biological Sciences B}
  \bibinfo{volume}{270}, \bibinfo{pages}{921--928}.
\bibitem[{Childs et~al.(2014)Childs, England, Young, Weitz and
  Whitaker}]{Childs2014}
\bibinfo{author}{Childs, L.M.}, \bibinfo{author}{England, W.E.},
  \bibinfo{author}{Young, M.J.}, \bibinfo{author}{Weitz, J.S.},
  \bibinfo{author}{Whitaker, R.J.}, \bibinfo{year}{2014}.
\newblock \bibinfo{title}{Crispr-induced distributed immunity in microbial
  populations.}
\newblock \bibinfo{journal}{PloS one} \bibinfo{volume}{9},
  \bibinfo{pages}{e101710}.
\bibitem[{Doudna and Charpentier(2014)}]{Doudna2014}
\bibinfo{author}{Doudna, J.A.}, \bibinfo{author}{Charpentier, E.},
  \bibinfo{year}{2014}.
\newblock \bibinfo{title}{The new frontier of genome engineering with
  crispr-cas9}.
\newblock \bibinfo{journal}{Science} \bibinfo{volume}{346}.
\bibitem[{Edgar(2007)}]{Edgar2007}
\bibinfo{author}{Edgar, R.C.}, \bibinfo{year}{2007}.
\newblock \bibinfo{title}{{PILER-CR: fast and accurate identification of CRISPR
  repeats.}}
\newblock \bibinfo{journal}{BMC Bioinformatics} \bibinfo{volume}{8},
  \bibinfo{pages}{1--6}.
\bibitem[{Esty and Banfield(2003)}]{Esty2003}
\bibinfo{author}{Esty, W.W.}, \bibinfo{author}{Banfield, J.D.},
  \bibinfo{year}{2003}.
\newblock \bibinfo{title}{{The Box-Percentile Plot}}.
\newblock \bibinfo{journal}{Journal of Statistical Software}
  \bibinfo{volume}{8}, \bibinfo{pages}{1--14}.
\bibitem[{Esvelt et~al.(2014)Esvelt, Smidler, Catteruccia and
  Church}]{Esvelt2014}
\bibinfo{author}{Esvelt, K.M.}, \bibinfo{author}{Smidler, A.L.},
  \bibinfo{author}{Catteruccia, F.}, \bibinfo{author}{Church, G.M.},
  \bibinfo{year}{2014}.
\newblock \bibinfo{title}{{Concerning RNA-guided gene drives for the alteration
  of wild populations.}}
\newblock \bibinfo{journal}{eLife} \bibinfo{volume}{3},
  \bibinfo{pages}{e03401}.
\bibitem[{Grissa et~al.(2007a)Grissa, Vergnaud and Pourcel}]{Grissa2007finder}
\bibinfo{author}{Grissa, I.}, \bibinfo{author}{Vergnaud, G.},
  \bibinfo{author}{Pourcel, C.}, \bibinfo{year}{2007}a.
\newblock \bibinfo{title}{{CRISPRFinder: a web tool to identify clustered
  regularly interspaced short palindromic repeats}}.
\newblock \bibinfo{journal}{Nucleic Acids Research} \bibinfo{volume}{35},
  \bibinfo{pages}{W52--W57}.
\bibitem[{Grissa et~al.(2007b)Grissa, Vergnaud and
  Pourcel}]{Grissa2007database}
\bibinfo{author}{Grissa, I.}, \bibinfo{author}{Vergnaud, G.},
  \bibinfo{author}{Pourcel, C.}, \bibinfo{year}{2007}b.
\newblock \bibinfo{title}{{The CRISPRdb database and tools to display CRISPRs
  and to generate dictionaries of spacers and repeats}}.
\newblock \bibinfo{journal}{BMC Bioinformatics} \bibinfo{volume}{8},
  \bibinfo{pages}{172}.
\bibitem[{Han and Deem(2017)}]{Han2017}
\bibinfo{author}{Han, P.}, \bibinfo{author}{Deem, M.W.}, \bibinfo{year}{2017}.
\newblock \bibinfo{title}{{Non-classical phase diagram for virus bacterial
  coevolution mediated by clustered regularly interspaced short palindromic
  repeats}}.
\newblock \bibinfo{journal}{Journal of The Royal Society Interface}
  \bibinfo{volume}{14}.
\bibitem[{Horvath et~al.(2008)Horvath, Romero, Co{\^{u}}t{\'{e}}-Monvoisin,
  Richards, Deveau, Moineau, Boyaval, Fremaux and Barrangou}]{Horvath2008}
\bibinfo{author}{Horvath, P.}, \bibinfo{author}{Romero, D.A.},
  \bibinfo{author}{Co{\^{u}}t{\'{e}}-Monvoisin, A.C.},
  \bibinfo{author}{Richards, M.}, \bibinfo{author}{Deveau, H.},
  \bibinfo{author}{Moineau, S.}, \bibinfo{author}{Boyaval, P.},
  \bibinfo{author}{Fremaux, C.}, \bibinfo{author}{Barrangou, R.},
  \bibinfo{year}{2008}.
\newblock \bibinfo{title}{{Diversity, activity, and evolution of CRISPR loci in
  Streptococcus thermophilus.}}
\newblock \bibinfo{journal}{Journal of Bacteriology} \bibinfo{volume}{190},
  \bibinfo{pages}{1401--1412}.
\bibitem[{Hsu et~al.(2014)Hsu, Lander and Zhang}]{Hsu2014}
\bibinfo{author}{Hsu, P.}, \bibinfo{author}{Lander, E.},
  \bibinfo{author}{Zhang, F.}, \bibinfo{year}{2014}.
\newblock \bibinfo{title}{{Development and Applications of CRISPR-Cas9 for
  Genome Engineering}}.
\newblock \bibinfo{journal}{Cell} \bibinfo{volume}{157},
  \bibinfo{pages}{1262--1278}.
\bibitem[{Hynes et~al.(2016a)Hynes, Labrie and
  Moineau}]{Hynes2016programmingCRISPR}
\bibinfo{author}{Hynes, A.P.}, \bibinfo{author}{Labrie, S.J.},
  \bibinfo{author}{Moineau, S.}, \bibinfo{year}{2016}a.
\newblock \bibinfo{title}{{Programming Native CRISPR Arrays for the Generation
  of Targeted Immunity}}.
\newblock \bibinfo{journal}{mBio} \bibinfo{volume}{7},
  \bibinfo{pages}{e00202--16}.
\bibitem[{Hynes et~al.(2016b)Hynes, Lemay and
  Moineau}]{Hynes2016applicationsofCRISPRinnature}
\bibinfo{author}{Hynes, A.P.}, \bibinfo{author}{Lemay, M.L.},
  \bibinfo{author}{Moineau, S.}, \bibinfo{year}{2016}b.
\newblock \bibinfo{title}{{Applications of CRISPR-Cas in its natural habitat}}.
\newblock \bibinfo{journal}{Current Opinion in Chemical Biology}
  \bibinfo{volume}{34}, \bibinfo{pages}{30--36}.
\bibitem[{Hynes et~al.(2014)Hynes, Villion and Moineau}]{Hynes2014}
\bibinfo{author}{Hynes, A.P.}, \bibinfo{author}{Villion, M.},
  \bibinfo{author}{Moineau, S.}, \bibinfo{year}{2014}.
\newblock \bibinfo{title}{{Adaptation in bacterial CRISPR-Cas immunity can be
  driven by defective phages}}.
\newblock \bibinfo{journal}{Nature Communications} \bibinfo{volume}{5},
  \bibinfo{pages}{1--6}.
\bibitem[{Ishino et~al.(1987)Ishino, Shinagawa, Makino, Amemura and
  Nakata}]{Ishino1987}
\bibinfo{author}{Ishino, Y.}, \bibinfo{author}{Shinagawa, H.},
  \bibinfo{author}{Makino, K.}, \bibinfo{author}{Amemura, M.},
  \bibinfo{author}{Nakata, A.}, \bibinfo{year}{1987}.
\newblock \bibinfo{title}{{Nucleotide sequence of the iap gene, responsible for
  alkaline phosphatase isozyme conversion in Escherichia coli, and
  identification of the gene product}}.
\newblock \bibinfo{journal}{Journal of Bacteriology} \bibinfo{volume}{169},
  \bibinfo{pages}{5429--5433}.
\bibitem[{Koonin and Wolf(2016)}]{Koonin2016}
\bibinfo{author}{Koonin, E.V.}, \bibinfo{author}{Wolf, Y.I.},
  \bibinfo{year}{2016}.
\newblock \bibinfo{title}{{Just how Lamarckian is CRISPR-Cas immunity: the
  continuum of evolvability mechanisms}}.
\newblock \bibinfo{journal}{Biology Direct} \bibinfo{volume}{11},
  \bibinfo{pages}{9}.
\bibitem[{Koskella and Brockhurst(2014)}]{Koskella2014}
\bibinfo{author}{Koskella, B.}, \bibinfo{author}{Brockhurst, M.A.},
  \bibinfo{year}{2014}.
\newblock \bibinfo{title}{{Bacteria-phage coevolution as a driver of ecological
  and evolutionary processes in microbial communities.}}
\newblock \bibinfo{journal}{FEMS Microbiology Reviews} \bibinfo{volume}{38},
  \bibinfo{pages}{916--931}.
\bibitem[{Kunin et~al.(2008)Kunin, He, Warnecke, Peterson, {Garcia Martin},
  Haynes, Ivanova, Blackall, Breitbart, Rohwer, McMahon and
  Hugenholtz}]{Kunin2008}
\bibinfo{author}{Kunin, V.}, \bibinfo{author}{He, S.},
  \bibinfo{author}{Warnecke, F.}, \bibinfo{author}{Peterson, S.B.},
  \bibinfo{author}{{Garcia Martin}, H.}, \bibinfo{author}{Haynes, M.},
  \bibinfo{author}{Ivanova, N.}, \bibinfo{author}{Blackall, L.L.},
  \bibinfo{author}{Breitbart, M.}, \bibinfo{author}{Rohwer, F.},
  \bibinfo{author}{McMahon, K.D.}, \bibinfo{author}{Hugenholtz, P.},
  \bibinfo{year}{2008}.
\newblock \bibinfo{title}{{A bacterial metapopulation adapts locally to phage
  predation despite global dispersal}}.
\newblock \bibinfo{journal}{Genome Research} \bibinfo{volume}{18},
  \bibinfo{pages}{293--7}.
\bibitem[{Kupczok and Bollback(2013)}]{Kupczok2013}
\bibinfo{author}{Kupczok, A.}, \bibinfo{author}{Bollback, J.P.},
  \bibinfo{year}{2013}.
\newblock \bibinfo{title}{Probabilistic models for {CRISPR} spacer content
  evolution.}
\newblock \bibinfo{journal}{BMC Evolutionary Biology} \bibinfo{volume}{13},
  \bibinfo{pages}{54}.
\bibitem[{Kupczok et~al.(2015)Kupczok, Landan and Dagan}]{Kupczok2015}
\bibinfo{author}{Kupczok, A.}, \bibinfo{author}{Landan, G.},
  \bibinfo{author}{Dagan, T.}, \bibinfo{year}{2015}.
\newblock \bibinfo{title}{{The contribution of genetic recombination to CRISPR
  array evolution}}.
\newblock \bibinfo{journal}{Genome Biology and Evolution} \bibinfo{volume}{7},
  \bibinfo{pages}{1925--1939}.
\bibitem[{Levin(2010)}]{Levin2010}
\bibinfo{author}{Levin, B.R.}, \bibinfo{year}{2010}.
\newblock \bibinfo{title}{Nasty viruses, costly plasmids, population dynamics,
  and the conditions for establishing and maintaining crispr-mediated adaptive
  immunity in bacteria.}
\newblock \bibinfo{journal}{PLoS genetics} \bibinfo{volume}{6},
  \bibinfo{pages}{e1001171}.
\bibitem[{Lopez-Sanchez et~al.(2012)Lopez-Sanchez, Sauvage, {Da Cunha},
  Clermont, {Ratsima Hariniaina}, Gonzalez-Zorn, Poyart, Rosinski-Chupin and
  Glaser}]{Lopez-Sanchez2012}
\bibinfo{author}{Lopez-Sanchez, M.J.}, \bibinfo{author}{Sauvage, E.},
  \bibinfo{author}{{Da Cunha}, V.}, \bibinfo{author}{Clermont, D.},
  \bibinfo{author}{{Ratsima Hariniaina}, E.}, \bibinfo{author}{Gonzalez-Zorn,
  B.}, \bibinfo{author}{Poyart, C.}, \bibinfo{author}{Rosinski-Chupin, I.},
  \bibinfo{author}{Glaser, P.}, \bibinfo{year}{2012}.
\newblock \bibinfo{title}{{The highly dynamic CRISPR1 system of Streptococcus
  agalactiae controls the diversity of its mobilome}}.
\newblock \bibinfo{journal}{Molecular Microbiology} \bibinfo{volume}{85},
  \bibinfo{pages}{1057--1071}.
\bibitem[{Makarova et~al.(2011a)Makarova, Aravind, Wolf and
  Koonin}]{Makarova2011origin}
\bibinfo{author}{Makarova, K.S.}, \bibinfo{author}{Aravind, L.},
  \bibinfo{author}{Wolf, Y.I.}, \bibinfo{author}{Koonin, E.V.},
  \bibinfo{year}{2011}a.
\newblock \bibinfo{title}{Unification of cas protein families and a simple
  scenario for the origin and evolution of crispr-cas systems.}
\newblock \bibinfo{journal}{Biology Direct} \bibinfo{volume}{6},
  \bibinfo{pages}{38}.
\bibitem[{Makarova et~al.(2011b)Makarova, Haft, Barrangou, Brouns, Charpentier,
  Horvath, Moineau, Mojica, Wolf, Yakunin, van~der Oost and
  Koonin}]{Makarova2011classification}
\bibinfo{author}{Makarova, K.S.}, \bibinfo{author}{Haft, D.H.},
  \bibinfo{author}{Barrangou, R.}, \bibinfo{author}{Brouns, S.J.J.},
  \bibinfo{author}{Charpentier, E.}, \bibinfo{author}{Horvath, P.},
  \bibinfo{author}{Moineau, S.}, \bibinfo{author}{Mojica, F.J.M.},
  \bibinfo{author}{Wolf, Y.I.}, \bibinfo{author}{Yakunin, A.F.},
  \bibinfo{author}{van~der Oost, J.}, \bibinfo{author}{Koonin, E.V.},
  \bibinfo{year}{2011}b.
\newblock \bibinfo{title}{{Evolution and classification of the CRISPR–Cas
  systems}}.
\newblock \bibinfo{journal}{Nature Reviews Microbiology} \bibinfo{volume}{9},
  \bibinfo{pages}{467--477}.
\bibitem[{Makarova et~al.(2015)Makarova, Wolf, Alkhnbashi, Costa, Shah,
  Saunders, Barrangou, Brouns, Charpentier, Haft, Horvath, Moineau, Mojica,
  Terns, Terns, White, Yakunin, Garrett, van~der Oost, Backofen and
  Koonin}]{Makarova2015}
\bibinfo{author}{Makarova, K.S.}, \bibinfo{author}{Wolf, Y.I.},
  \bibinfo{author}{Alkhnbashi, O.S.}, \bibinfo{author}{Costa, F.},
  \bibinfo{author}{Shah, S.A.}, \bibinfo{author}{Saunders, S.J.},
  \bibinfo{author}{Barrangou, R.}, \bibinfo{author}{Brouns, S.J.J.},
  \bibinfo{author}{Charpentier, E.}, \bibinfo{author}{Haft, D.H.},
  \bibinfo{author}{Horvath, P.}, \bibinfo{author}{Moineau, S.},
  \bibinfo{author}{Mojica, F.J.M.}, \bibinfo{author}{Terns, R.M.},
  \bibinfo{author}{Terns, M.P.}, \bibinfo{author}{White, M.F.},
  \bibinfo{author}{Yakunin, A.F.}, \bibinfo{author}{Garrett, R.A.},
  \bibinfo{author}{van~der Oost, J.}, \bibinfo{author}{Backofen, R.},
  \bibinfo{author}{Koonin, E.V.}, \bibinfo{year}{2015}.
\newblock \bibinfo{title}{{An updated evolutionary classification of CRISPR-Cas
  systems.}}
\newblock \bibinfo{journal}{Nature Reviews Microbiology} \bibinfo{volume}{13},
  \bibinfo{pages}{722--736}.
\bibitem[{Marraffini(2015)}]{Marraffini2015}
\bibinfo{author}{Marraffini, L.A.}, \bibinfo{year}{2015}.
\newblock \bibinfo{title}{{CRISPR-Cas immunity in prokaryotes}}.
\newblock \bibinfo{journal}{Nature} \bibinfo{volume}{526},
  \bibinfo{pages}{55--61}.
\bibitem[{Mira et~al.(2010)Mira, Mart{\'{i}}n-Cuadrado, D'Auria and
  Rodr{\'{i}}guez-Valera}]{Mira2010}
\bibinfo{author}{Mira, A.}, \bibinfo{author}{Mart{\'{i}}n-Cuadrado, A.B.},
  \bibinfo{author}{D'Auria, G.}, \bibinfo{author}{Rodr{\'{i}}guez-Valera, F.},
  \bibinfo{year}{2010}.
\newblock \bibinfo{title}{{The bacterial pan-genome: a new paradigm in
  microbiology.}}
\newblock \bibinfo{journal}{International Microbiology: The Official Journal of
  the Spanish Society for Microbiology} \bibinfo{volume}{13},
  \bibinfo{pages}{45--57}.
\bibitem[{Oye et~al.(2014)Oye, Esvelt, Appleton, Catteruccia, Church, Kuiken,
  Lightfoot, McNamara, Smidler and Collins}]{Oye2014}
\bibinfo{author}{Oye, K.A.}, \bibinfo{author}{Esvelt, K.},
  \bibinfo{author}{Appleton, E.}, \bibinfo{author}{Catteruccia, F.},
  \bibinfo{author}{Church, G.}, \bibinfo{author}{Kuiken, T.},
  \bibinfo{author}{Lightfoot, S.B.Y.}, \bibinfo{author}{McNamara, J.},
  \bibinfo{author}{Smidler, A.}, \bibinfo{author}{Collins, J.P.},
  \bibinfo{year}{2014}.
\newblock \bibinfo{title}{{Regulating gene drives}}.
\newblock \bibinfo{journal}{Science} \bibinfo{volume}{345}.
\bibitem[{Rath et~al.(2015)Rath, Amlinger, Rath and Lundgren}]{Rath2015}
\bibinfo{author}{Rath, D.}, \bibinfo{author}{Amlinger, L.},
  \bibinfo{author}{Rath, A.}, \bibinfo{author}{Lundgren, M.},
  \bibinfo{year}{2015}.
\newblock \bibinfo{title}{{The CRISPR-Cas immune system: Biology, mechanisms
  and applications}}.
\newblock \bibinfo{journal}{Biochimie} \bibinfo{volume}{117},
  \bibinfo{pages}{119--128}.
\bibitem[{Sun et~al.(2015)Sun, Thomas, Barrangou and Banfield}]{Sun2015}
\bibinfo{author}{Sun, C.L.}, \bibinfo{author}{Thomas, B.C.},
  \bibinfo{author}{Barrangou, R.}, \bibinfo{author}{Banfield, J.F.},
  \bibinfo{year}{2015}.
\newblock \bibinfo{title}{{Metagenomic reconstructions of bacterial CRISPR loci
  constrain population histories}}.
\newblock \bibinfo{journal}{The ISME Journal} \bibinfo{volume}{10},
  \bibinfo{pages}{1--13}.
\bibitem[{Unckless et~al.(2017)Unckless, Clark and Messer}]{Unckless2017}
\bibinfo{author}{Unckless, R.L.}, \bibinfo{author}{Clark, A.G.},
  \bibinfo{author}{Messer, P.W.}, \bibinfo{year}{2017}.
\newblock \bibinfo{title}{{Evolution of Resistance Against CRISPR/Cas9 Gene
  Drive}}.
\newblock \bibinfo{journal}{Genetics} \bibinfo{volume}{205}.
\bibitem[{Vernikos et~al.(2014)Vernikos, Medini, Riley and
  Tettelin}]{Vernikos2014}
\bibinfo{author}{Vernikos, G.}, \bibinfo{author}{Medini, D.},
  \bibinfo{author}{Riley, D.R.}, \bibinfo{author}{Tettelin, H.},
  \bibinfo{year}{2014}.
\newblock \bibinfo{title}{{Ten years of pan-genome analyses.}}
\newblock \bibinfo{journal}{Current Opinion in Microbiology}
  \bibinfo{volume}{23C}, \bibinfo{pages}{148--154}.
\bibitem[{Wakeley(2008)}]{Wakeley2008}
\bibinfo{author}{Wakeley, J.}, \bibinfo{year}{2008}.
\newblock \bibinfo{title}{Coalescent Theory: An Introduction}.
\newblock \bibinfo{publisher}{Roberts \& Company}.
\bibitem[{Weinberger et~al.(2012)Weinberger, Wolf, Lobkovsky, Gilmore and
  Koonin}]{Weinberger2012}
\bibinfo{author}{Weinberger, A.D.}, \bibinfo{author}{Wolf, Y.I.},
  \bibinfo{author}{Lobkovsky, A.E.}, \bibinfo{author}{Gilmore, M.S.},
  \bibinfo{author}{Koonin, E.V.}, \bibinfo{year}{2012}.
\newblock \bibinfo{title}{{Viral diversity threshold for adaptive immunity in
  prokaryotes.}}
\newblock \bibinfo{journal}{mBio} \bibinfo{volume}{3},
  \bibinfo{pages}{e00456--12}.
\bibitem[{Westra et~al.(2014)Westra, Buckling and Fineran}]{Westra2014}
\bibinfo{author}{Westra, E.R.}, \bibinfo{author}{Buckling, A.},
  \bibinfo{author}{Fineran, P.C.}, \bibinfo{year}{2014}.
\newblock \bibinfo{title}{{CRISPR–Cas systems: beyond adaptive immunity}}.
\newblock \bibinfo{journal}{Nature Reviews Microbiology} \bibinfo{volume}{12},
  \bibinfo{pages}{317--326}.
\bibitem[{Westra et~al.(2016)Westra, Dowling, Broniewski and van
  Houte}]{Westra2016}
\bibinfo{author}{Westra, E.R.}, \bibinfo{author}{Dowling, A.J.},
  \bibinfo{author}{Broniewski, J.M.}, \bibinfo{author}{van Houte, S.},
  \bibinfo{year}{2016}.
\newblock \bibinfo{title}{{Evolution and Ecology of CRISPR}}.
\newblock \bibinfo{journal}{Annual Review of Ecology, Evolution, and
  Systematics} \bibinfo{volume}{47}, \bibinfo{pages}{307--331}.

\end{thebibliography}

\end{document}